\documentclass[reqno]{amsart}
\usepackage{mathrsfs}
\usepackage{txfonts}
\usepackage{amsfonts}
\usepackage{amssymb}
\usepackage[mathscr]{eucal}
\usepackage{amsmath}
\usepackage[bookmarksopen,colorlinks,citecolor=blue,
linkcolor=black,pdfstartview=FitH]{hyperref}
\usepackage{ytableau}
\usepackage{float}
\usepackage{url}
\usepackage{appendix}
\usepackage{algorithm}
\usepackage{algorithmic}
\usepackage[all]{xy}

\newtheorem{theorem}{Theorem}[section]

\newtheorem{lemma}[theorem]{Lemma}
\newtheorem{proposition}[theorem]{Proposition}
\theoremstyle{definition}
\newtheorem{definition}[theorem]{Definition}

\newtheorem{example}[theorem]{Example}

\newtheorem{question}[theorem]{Question}
\newtheorem{conjecture}{Conjecture}
\numberwithin{equation}{section}
\theoremstyle{remark}
\newtheorem{remark}[theorem]{Remark}

\makeatletter
\@namedef{subjclassname@2020}{\textup{2020} Mathematics Subject Classification}
\makeatother

\allowdisplaybreaks[4]

\begin{document}

\title{Deflation conjecture and local dimensions of Brent equations}
\pdfbookmark[0]{Deflation conjecture and local dimensions of Brent equations}{}

\author[li]{Xin Li}
\address{Department of Mathematics, Zhejiang University of Technology, Hangzhou 310023, P. R. China}
\email{xinli1019@126.com (Xin Li)}
%
%
\author[zhang]{Liping Zhang}
\address{School of Mathematical Sciences, Qufu Normal University, Qufu 273165, P. R. China}
\email{zhanglp06@gmail.com (Liping Zhang)}
\author[ke]{Yifen Ke}
\address{School of Mathematics and Statistics, Fujian Normal University, 350117 Fuzhou, P.R. China}
\email{keyifen@fjnu.edu.cn (Yifen Ke)}





\subjclass[2020]{Primary 14Q15; Secondary 65H10}
\keywords{Deflation, Singularity, Brent equations, Local dimension,  Jacobian}

\begin{abstract}
In this paper, a classical deflation process raised by Dayton, Li and Zeng is realized for the Brent equations, which provides new  bounds for local dimensions of the solution set. Originally, this deflation process focuses on isolated solutions. We generalize it to the case of irreducible components and a related conjecture is given. We analyze its realization  and apply it to the Brent equations. The decrease of the nullities is easily observed. So the deflation process can be served as a useful tool for determining the local dimensions.  In addition, our result implies that along with the decrease of the tensor rank, the singular solutions will become more and more.
\end{abstract}
\maketitle

\section{Introduction}\label{sec:intro}

This paper is about study of certain algebraic varieties, related to the problem of fast matrix multiplication. For given $m, n, p$, and $r$, the set of all bilinear schemes, of bilinear complexity $r$, for multiplication of an $m \times n$ matrix by an $n \times p$ matrix may be considered as an affine algebraic variety. The defining equations of this variety are known as Brent equations. In \cite[Sect. 8]{Heule21}, Heule et al. raised the question what is the dimension of this variety, especially in the case $m=n=p=3$, $r=23$. The goal of the present article is to study local dimensions of Brent varieties at some points.
The tool used here is the deflation process described in \cite{DZeng-05,DLZeng-11}. It improves the result in our recent paper \cite{LBZ-23}.


Let $F(\textbf{x}):~\mathbb{C}^n\to\mathbb{C}^m$ be a polynomial system with solution set $\textbf{V}(F)$. Let $J(\textbf{x})\in \mathbb{C}^{m\times n}$ be the Jacobian of $F(\textbf{x})$.
Given a solution $\hat{\textbf{x}}\in \textbf{V}(F)$ and let $\dim_{\hat{\textbf{x}}} \textbf{V}(F)$ denote its local dimension. The Jacobian criterion of \cite[Theorem 4.1.12]{DS07} tells us that the nullity of $J(\hat{\textbf{x}})$ is an upper bound of $\dim_{\hat{\textbf{x}}}\textbf{V}(F)$:
\begin{equation}\label{eq-nullj}
\dim_{\hat{\textbf{x}}} \textbf{V}(F)\leq n-\operatorname{rank}~\left( J(\hat{\textbf{x}})\right).
\end{equation}
Often, (\ref{eq-nullj}) may be not an equality, that is,
\begin{equation}\label{eq-nulljs}
\dim_{\hat{\textbf{x}}} \textbf{V}(F)< n-\operatorname{rank}~\left( J(\hat{\textbf{x}})\right).
\end{equation}
When (\ref{eq-nulljs}) happens, in numerical algebraic geometry, the solution $\hat{\textbf{x}}$ is said to be singular (or ultrasingular) \cite{BHSW-13,SW-05,Zeng-23}. It is well-known that the locally quadratical convergence of Newton's method is only  guaranteed when (\ref{eq-nullj}) is an equality (see e.g. \cite{Zeng-23}).
Near the neighbourhood of singular solutions, Newton's method often loses its locally quadratical convergence. In order to recover Newton's method and turn (\ref{eq-nullj}) into an equality, the ``deflation'' technique is introduced, which is also a useful tool for the estimation of local dimensions.

The deflation technique was firstly introduced by Ojika \cite{OWM-83,O-87} for the isolated solution. After that many improvements and generalizations were given \cite{DZeng-05,DLZeng-11,HW-13,LVZ-06}. Given a polynomial system and a solution, a deflation process produces a new expanded polynomial system and a new expanded solution. For a generic (smooth) point, the nullity of Jacobian of the new expanded solution can be much closer to its local dimension. For isolated singular solutions, it was shown in \cite{DZeng-05,DLZeng-11,LVZ-06} that the nullity will decrease to zero after a finite number of steps. For a positive dimensional irreducible component, Hauenstein and Wampler introduced a new form of deflation called strong deflation \cite{HW-13}, and they refer to the deflation process raised in \cite{DZeng-05,DLZeng-11,LVZ-06} as weak deflation.
Through strong deflation process, Hauenstein and Wampler  proved that their deflation sequence will decrease to the local dimension of the generic (smooth) points of the irreducible component \cite[Lemma 4.7]{HW-13}. For the other (singular) points, the stabilized number of the (strong) deflation sequence will be no greater than their local dimension \cite[Sect.5.3]{HW-13}.

For the (weak) deflation raised in \cite{DZeng-05,DLZeng-11,LVZ-06}, it focuses on the isolated solutions. However, it is not hard to see that it is also suitable to the positive dimensional solutions. In \cite[Sect. 9]{Zeng-23}, in order to recover the quadratic convergence rate of Newton's iteration at ultrasingular zeroes, Zeng conjectured that for generic (smooth) points, the weak deflation process will also terminate at the local dimension of positive dimensional solutions after a finite number of steps. That is, similar properties of strong deflation in \cite{HW-13} also hold for the weak deflation process raised in \cite{DZeng-05,DLZeng-11} under the positive dimensional generalization.

By Lemma 4.7 of \cite{HW-13}, the strong deflation  can turn (\ref{eq-nullj}) into an equality. However, it will introduce much more new variables and polynomials than the weak deflation.
So the implementation of strong deflation is not so practical for large polynomial systems, such as the Brent equations. In addition, the local dimension determination methods raised in \cite{Bates09,Lai-21,WHS-11} are also not suitable to large polynomial systems, too. Let $\langle m,n,p\rangle $ denote the matrix multiplication tensor. We find that it is convenient to calculate the first 4 numbers of the weak deflation sequences for $mnp\leq 125$. So our results imply that the weak deflation is a useful tool for determining the local dimensions for polynomial systems arising in the field of bilinear complexity \cite{Lands-17}.

\subsection*{Organization and main results}
Firstly, after some preliminaries, more details of the conjecture in \cite{Zeng-23} are given,  see Section \ref{sec-prelim} and Conjecture \ref{conj-dc} in Section \ref{sec:defl}. Some elementary results are discussed, which are similar to those in \cite{HW-13}. Secondly, the constructions of the Jacobians in the first three steps of the deflation process in \cite{DZeng-05,DLZeng-11} are analysed, which are also suitable for other steps, see Section \ref{se-realization}. Thirdly, numerical experiments are given for the solutions of the Brent equations, see Section \ref{se-numerexp}. The first 4 numbers of the deflation sequences of many known solutions (see e.g. \cite{Faw22+,Heule19}) are calculated, which provide bounds for their local dimensions.
Some interesting results are found. For example, in the sense of solving polynomial systems, the local dimension of Strassen's classical solution is 23, see Section \ref{secs-strass}. For the solutions provided in \cite{Heule19}, after the first deflation step, we find that most solutions are singular. This provides a new numerical evidence that finding the tensor decomposition of low rank is hard \cite{deLim-08, HLim-13}. Some remarks and open problems are given in Section \ref{se-ropen}.

Our  numerical results are obtained by Matlab2022a. Some codes and data can be found on \cite{L-23}. The numerical experiments are taken on the server of ZJUT with the configuration: Intel(R) Xeon(R) Silver 4210R CPU running at 2.40GHz plus 2.39 GHz and 256 GB memory.

\section{Preliminaries}\label{sec-prelim}

Throughout this paper,  $\mathbb{C}$ is the complex field.
The vector space of $m\times n$ complex matrices
is denoted by $\mathbb{C}^{m\times n}$. Let $\mathbb{C}^n$ (or $\mathbb{C}^{n\times1}$) denote the $n$-dimensional column vector space over $\mathbb{C}$. Given a matrix $A\in \mathbb{C}^{m\times n}$. Let $\operatorname{rank}(A)$ denote the rank of $A$. The nullity of $A$ is
$$n-\operatorname{rank}(A),$$
which is the dimension of the null space of $A$ and denoted by $\mathscr{N}(A)$.

Let $F:\mathbb{C}^n\to \mathbb{C}^m$ be a polynomial map, so that there exist $m$ polynomials $f_i:\mathbb{C}^n\to \mathbb{C}$ ($i=1,2,\dots,m$) such that $F=[f_1,f_2,\dots,f_m]^\textbf{T}$. The solution set of $F$ over $\mathbb{C}$ is called \emph{an algebraic set} and denoted by $\textbf{V}(F)$, that is,
$$\textbf{V}(F)=\{\textbf{x}\in \mathbb{C}^n| f_1(\textbf{x})=f_2(\textbf{x})=\cdots=f_m(\textbf{x})=0\}.$$
Let $J(\textbf{x})$ denote the Jacobian matrix of $F(\textbf{x})$, so that $J(\textbf{x})\in \mathbb{C}^{m\times n}$. An algebraic set $\textbf{V}$  is reducible if there exist nonempty algebraic sets $V_1,V_2\subsetneqq\textbf{V}$ such that
$\textbf{V}=V_1\cup V_2$. $\textbf{V}$ is irreducible if no such decomposition exists. Each algebraic set $\textbf{V}$ can be decomposed into a unique finite union of irreducible
algebraic subsets:
\begin{equation*}
\textbf{V}=V_1\cup V_2\cup\cdots\cup V_r,
\end{equation*}
where $V_i$ is called the irreducible component of $\textbf{V}$.
For an irreducible component $V$, the set of manifold points in $V$ is connected and its dimension as a complex manifold is defined to be the dimension of $V$, denoted $\dim V$ \cite{BEHP-21,SW-05}. If
$\dim V=k$, $V$ is said to be a $k$-dimensional irreducible component.
The local dimension of a point $\textbf{x}\in \textbf{V}$ is the maximal dimension of the irreducible components of $\textbf{V}$ that contain $\textbf{x}$, which is denoted by $\dim_{\textbf{x}}\textbf{V}$.
For an irreducible component $V$, a property $P$ holds generically on $V$ if there is a nonempty Zariski open set $U\subseteq V$ such that $P$ holds on $V$. Each point of $U$ is called a generic point of $V$ with respect to $P$.

Considering $\textbf{V}(F)$ as a geometric object in $\mathbb{C}^n$, we have the following definition which makes a slight change of \cite[Def. 2.1]{HHS-23}.
\begin{definition}\label{def-geos}
A point $\textbf{x}_0\in \textbf{V}(F)$ is called \emph{geometric smooth}, if there is a unique irreducible component $V\subseteq  \textbf{V}(F)$ through $x_0$, and
$$ \dim T_{\textbf{x}_0}(V)=\dim V $$
where $T_{\textbf{x}_0}(V)$ is the tangent space of $V$ at $\textbf{x}_0$. Otherwise, $\textbf{x}_0$ is called a \emph{geometric singular} point of $\textbf{V}(F)$. The set of all geometric singular points
of $\textbf{V}(F)$ is denoted by $\operatorname{Sing}_G(\textbf{V}(F))$.
\end{definition}
It is well-known that geometric smooth points are generic on an irreducible component.
On the other hand,  we have the following definition.
\begin{definition}\label{def-nums}
Let $F(\textbf{x})$ be a polynomial system whose solution set and Jacobian are $\textbf{V}(F)$ and $J(\textbf{x})$, respectively.
A point $\textbf{x}_0\in \textbf{V}(F)$ is called \emph{numerical smooth}, if there is a unique irreducible component $V\subseteq  \textbf{V}(F)$ through $\textbf{x}_0$, and
$$ \mathscr{N}(J(\textbf{x}_0))=\dim V. $$

Otherwise, $\textbf{x}_0$ is called a \emph{numerical singular} point of $\textbf{V}(F)$. The set of all numerical singular points
of $\textbf{V}(F)$ is denoted by $\operatorname{Sing}_N(\textbf{V}(F))$.
\end{definition}
\begin{remark}
It is well-known that if $\textbf{x}_0\in\operatorname{Sing}_N(\textbf{V}(F))$, then
\begin{equation}
 \mathscr{N}(J(\textbf{x}_0))>\dim_{\textbf{x}_0} \textbf{V}(F).
\end{equation}

Comparing Definition \ref{def-geos} and \ref{def-nums}, we know that numerical smooth points are geometric smooth.
However, numerical singular points may contain geometric smooth points. For example, the irreducible components with multiplicity greater than one \cite[Example 8.10]{BHSW-13}.

For the polynomial system $F=[f_1,f_2,\dots,f_m]^\textsc{T}$, let $$\langle f_1,f_2,\dots,f_m \rangle\subseteq \mathbb{C}[x_1,x_2,\dots,x_n]$$
denote the ideal generated by $f_i$. On the other hand,
let $\textbf{I}(\textbf{V}(F))$ denote the vanishing ideal on $\textbf{V}(F)$:
$$\textbf{I}(\textbf{V}(F))=\{f\in \mathbb{C}[x_1,x_2,\dots,x_n]|
f(\textbf{x})=0~\text{for~all}~\textbf{x}\in \textbf{V}(F) \}.$$
By Proposition 2 of \cite[Sect.6 Chapter 9]{cox15} and  Corollary 9 of \cite[Sect.7 Chapter 9]{cox15}, we know that if
\begin{equation}
 \langle f_1,f_2,\dots,f_m \rangle=\textbf{I}(\textbf{V}(F)),
\end{equation}
then Definition \ref{def-geos} and \ref{def-nums} are equivalent, that is, the geometric smooth (resp. singular) points and  numerical smooth (resp. singular) points are the same. However, for any polynomial system $F=[f_1,f_2,\dots,f_m]^\textsc{T}$, it is well-known that
$$\langle f_1,f_2,\dots,f_m \rangle\subseteq\textbf{I}(\textbf{V}(F)),$$
and deciding if they are equal is usually hard. So, for those polynomial systems we are interested in, the geometric smooth points may be numerical singular.
\end{remark}

The definition of semiregularity and ultrasingularity of \cite{Zeng-23} are formalised to irreducible components as follows.
For the definition of intersection multiplicity of an irreducible component, we refer to \cite[Sec. 11.3]{BHSW-13}.

\begin{definition}
For a polynomial system $F$, given a $k$-dimensional  irreducible component $V\subseteq \textbf{V}(F)$.
If all geometric smooth points are numerical smooth, then we call $V$ a \emph{$k$-dimensional semiregular irreducible component}. That is, there is a nonempty Zariski open set $U\subseteq V$ such that
$$k=\dim_{\textbf{x}} \textbf{V}(F)=T_\textbf{x}V=\mathscr{N}(J(\textbf{x})),$$
for all $\textbf{x}\in U$. Each point in $U$  is called a $k$-dimensional semiregular  solution.

On the other hand, if all geometric smooth points are numerical singular and have the same intersection multiplicity, then we call $V$ a \emph{$k$-dimensional ultrasingular irreducible component}. That is, there is a nonempty Zariski open set $U\subseteq V$ and a constant positive number $n_0$ such that
$$k=\dim_{\textbf{x}} \textbf{V}(F)=T_\textbf{x} V<\mathscr{N}(J(\textbf{x}))=n_0,$$
for all $\textbf{x}\in U$.  Each point in $U$ is called a $k$-dimensional ultrasingular solution.
\end{definition}

\section{Deflation conjecture for the irreducible component}\label{sec:defl}


The deflation process raised in \cite{DZeng-05,DLZeng-11} focuses on isolated singular solutions.
In this section,  we generalize it to the case of $k$-dimensional ultrasingular irreducible components.
More details of the conjecture in \cite{Zeng-23} for ultrasingular solutions are given. Similar discussions are given in \cite{HW-13}.
In the following, the deflation process always refers to the deflation process raised in \cite{DZeng-05,DLZeng-11} or its generalization.

\subsection{The first step of the  deflation process}
Let $V\subseteq \textbf{V}(F)$ be  a $k$-dimensional ultrasingular irreducible component, which has a nonzero Zariski open set
\begin{equation}\label{eq-u0}
U_0\subseteq V \quad\text{with}\quad k<\mathscr{N}(J(\textbf{x}))=n_0
\end{equation}
for all $\textbf{x}\in U_0$. Fix a $k$-dimensional ultrasingular solution $\hat{\textbf{x}}_1\in U_0$. So we have
\begin{equation}\label{eq-n0}
k<\mathscr{N}\left(J(\hat{\textbf{x}}_1)\right)=n_0.
\end{equation}
Then, choose $R_1\in \mathbb{C}^{n_0\times n}$ and a vector $d_1\in \mathbb{C}^{n_0}$ randomly, such that the matrix
\begin{equation}\label{eq-JR1}
             \left[
             \begin{array}{c}
              J(\hat{\textbf{x}}_1)\\
              R_1\\
              \end{array}
              \right]
              \end{equation}
is of full (column) rank, and therefore
the linear system
\begin{equation}\label{eq-Jaclinsys}
    \left[\begin{array}{c}
        J(\hat{\textbf{x}}_1)\\
        R_1\\
        \end{array}\right]\textbf{x}=\left[\begin{array}{c}
                     \textbf{0}\\
                      d_1\\
                \end{array}
                        \right]
\end{equation}
has a unique solution $\textbf{x}_2=\hat{\textbf{x}}_2\neq \textbf{0}$. By the discussion in Section 3.1 of \cite{HW-13}, we can see that besides $\hat{\textbf{x}}_1$,  for almost all  $\hat{\textbf{x}}\in V$, the equation (\ref{eq-Jaclinsys}) also has a unique solution.
Let $\textbf{x}_2\in \mathbb{C}^n$ be new variables and fixing $R_1$ for all points in $V$. Then the first step of deflation of $F$ is defined by:
\begin{equation}\label{eq-def1}
F_1(\textbf{x}_1,\textbf{x}_2)=
\left[\begin{array}{c}
           F(\textbf{x}_1) \\
\left[\begin{array}{c}
J(\textbf{x}_1)\\R_1\\\end{array}\right]\textbf{x}_2
-\left[\begin{array}{c}\textbf{0}\\d_1\\\end{array}\right]                                                               \end{array}\right],~\text{for all}~\textbf{x}_1\in V.
\end{equation}
So ($\hat{\textbf{x}}_1,\hat{\textbf{x}}_2$) is a  solution of the new polynomial system $F_1(\textbf{x}_1,\textbf{x}_2)$.

When $m\geq n$, let $\pi_n:~\mathbb{C}^m\to\mathbb{C}^n$ denote the natural projection map, which is defined by
\begin{equation}\label{eq-pin}
\pi_n: (x_1,x_2,\dots,x_m)\mapsto (x_1,x_2,\dots,x_n).
\end{equation}
Let $\textbf{V}(F_1)$ denote the solution set of $F_1$.
For $U_0$ defined in (\ref{eq-u0}), let
\begin{equation}\label{eq-W1}
W_1=\{y\in \textbf{V}(F_1)|\pi_n(y)\in U_0\}\subset \textbf{V}(F_1),
\end{equation}
and $\overline{W_1}$ be the Zariski closure of $W_1$.
By the discussion of Lemma 4.6 of \cite{HW-13}, we also have the following lemma.
\begin{lemma}\label{lem-kn1n0}
$ \overline{W_1}$ is an irreducible component of $\textbf{V}(F_1)$ which is generically isomorphic to $V$. In particular,
$$\dim \overline{W_1}=\dim V=k.$$
\end{lemma}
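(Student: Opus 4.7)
The plan is to exhibit $\overline{W_1}$ as the Zariski closure of the graph of a rational section of $\pi_n$ over $V$, and then to read off irreducibility, dimension, and the component property from this structural description.

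First, I would pin down a dense Zariski open subset $U_0' \subseteq U_0$ on which the block matrix formed by $J(\textbf{x}_1)$ and $R_1$, as in (\ref{eq-JR1}), has full column rank $n$. The locus where an $(m+n_0)\times n$ polynomial matrix has rank $n$ is Zariski-open, and it is nonempty because it contains $\hat{\textbf{x}}_1$ by the choice of $R_1$. On $U_0'$, Cramer's rule yields a regular map $s \colon U_0' \to \mathbb{C}^n$, $\textbf{x}_1 \mapsto \textbf{x}_2(\textbf{x}_1)$, defined by solving (\ref{eq-Jaclinsys}) with $\hat{\textbf{x}}_1$ replaced by $\textbf{x}_1$. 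The graph morphism $\phi = (\mathrm{id}, s) \colon U_0' \to \mathbb{C}^{2n}$ lands inside $W_1$, since for every $\textbf{x}_1 \in U_0'$ the pair $(\textbf{x}_1, s(\textbf{x}_1))$ satisfies each defining equation of $F_1$; moreover $\phi(U_0') = W_1 \cap \pi_n^{-1}(U_0')$ because $s$ is the unique solution of (\ref{eq-Jaclinsys}) over $U_0'$. The projection $\pi_n$ is an inverse for $\phi$, so $\phi$ is a biregular isomorphism onto its image.

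Second, irreducibility and dimension follow at once. Since $V$ is irreducible and $U_0'$ is a nonempty open subset, $U_0'$ is irreducible, hence so is $\phi(U_0')$, and hence so is $\overline{W_1} = \overline{\phi(U_0')}$. Because $\phi$ is an isomorphism onto its image, one obtains the generic isomorphism of $V$ and $\overline{W_1}$, together with
$$\dim \overline{W_1} = \dim \phi(U_0') = \dim U_0' = \dim V = k.$$

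Third, I would argue that $\overline{W_1}$ is maximal among irreducible closed subsets of $\textbf{V}(F_1)$. Let $Y \subseteq \textbf{V}(F_1)$ be an irreducible closed subset with $\overline{W_1} \subseteq Y$. Because $F(\textbf{x}_1)$ is a block of $F_1$, we have $\pi_n(Y) \subseteq \textbf{V}(F)$, and the irreducible set $\overline{\pi_n(Y)}$ contains $\overline{\pi_n(\overline{W_1})} \supseteq \overline{U_0'} = V$. Since $V$ is an irreducible component of $\textbf{V}(F)$, it is maximal among irreducible subsets of $\textbf{V}(F)$, so $\overline{\pi_n(Y)} = V$. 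For $\textbf{x}_1 \in U_0'$, the fiber $Y \cap \pi_n^{-1}(\textbf{x}_1)$ is contained in the solution set of the linear system (\ref{eq-Jaclinsys}), which is the single point $\phi(\textbf{x}_1)$. By the fiber-dimension theorem, a general fiber of $\pi_n|_Y \colon Y \to V$ therefore has dimension $0$, so $\dim Y = \dim V = k = \dim \overline{W_1}$. Combined with $\overline{W_1} \subseteq Y$ and irreducibility, this forces $Y = \overline{W_1}$. The step I expect to require the most care is this maximality argument: one must simultaneously use that $V$ is an irreducible component of $\textbf{V}(F)$ (to prevent $\overline{\pi_n(Y)}$ from growing beyond $V$) and that the generic fiber of $\pi_n|_Y$ is zero-dimensional, which in turn rests on the full-column-rank condition holding on a dense open subset of $V$, as established in the first step.
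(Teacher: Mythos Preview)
The paper does not supply its own proof; it simply invokes ``the discussion of Lemma~4.6 of \cite{HW-13}''. Your self-contained argument via the graph of the rational section $s$ is exactly the natural construction behind that citation, and your irreducibility, dimension, and maximality steps (including the use of the fibre-dimension theorem together with the fact that $V$ is a component of $\textbf{V}(F)$) are all correct.

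One assertion is unjustified as written: you claim $\overline{W_1}=\overline{\phi(U_0')}$. You have only established $\phi(U_0')=W_1\cap\pi_n^{-1}(U_0')$, which makes $\phi(U_0')$ open in $W_1$ but not automatically dense. Over a point $\textbf{x}_1\in U_0\setminus U_0'$ the block matrix $\left[\begin{smallmatrix}J(\textbf{x}_1)\\ R_1\end{smallmatrix}\right]$ has rank strictly less than $n$, so the fibre of $W_1$ there is either empty or a positive-dimensional affine subspace; such fibres can contribute extra irreducible pieces to $\overline{W_1}$ that are not contained in $\overline{\phi(U_0')}$. The clean repair is to replace $U_0$ by $U_0'$ in the definition (\ref{eq-W1}) from the outset---the paper itself already concedes, just before (\ref{eq-def1}), that the linear system (\ref{eq-Jaclinsys}) has a unique solution only for ``almost all'' points of $V$---after which $W_1=\phi(U_0')$ exactly and your argument goes through verbatim. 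With that adjustment your proof is both correct and considerably more explicit than the paper's bare citation.
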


 Let $J_1(\textbf{x}_1,\textbf{x}_{2})$ denote the Jacobian of $F_1(\textbf{x}_1,\textbf{x}_2)$. Then by Proposition 4 and Remark 5 of \cite{Lew-09}, with probability one, there is an open neighbourhood $\tilde{U}_1$ of
$(\hat{\textbf{x}}_1,\hat{\textbf{x}}_{2})$ such that the nullity
\begin{equation}\label{eq-n1}
\mathscr{N}(J_1(\textbf{x}_1,\textbf{x}_{2}))=\mathscr{N}(J_1(\hat{\textbf{x}}_1,\hat{\textbf{x}}_{2}))=n_1,
\end{equation}
for all $(\textbf{x}_1,\textbf{x}_2)\in \tilde{U}_1$. Then we let
\begin{equation}\label{eq-U1}
U_1=\overline{W}_1\cap \tilde{U}_1.
\end{equation}

\subsection{The second and subsequent steps}
In (\ref{eq-n1}), if $n_1=k$, then ($\hat{\textbf{x}}_1,\hat{\textbf{x}}_2$) is a $k$-dimensional semiregular zero of $F_1(\textbf{x}_1,\textbf{x}_2)$, so the singularity of $F(\textbf{x})$ at $\hat{\textbf{x}}_1$ and its neighbourhood is ``deflated''.
However, ($\hat{\textbf{x}}_1,\hat{\textbf{x}}_2$) may still be an ultrasingular zero of $F_1(\textbf{x}_1,\textbf{x}_2)$, that is, $n_1>k$.  In this case, we can repeat the same deflation process above for $F_1$ and obtain a new polynomial system $F_2$. More precisely, we can find and fix $R_2\in \mathbb{C}^{n_1\times 2n}$ and
$d_2\in \mathbb{C}^{n_1}$ such that
\begin{equation}\label{eq-def2}
F_2(\textbf{x}_1,\textbf{x}_2,\textbf{x}_3,\textbf{x}_4)=
\left[\begin{array}{c}
           F_1(\textbf{x}_1,\textbf{x}_2) \\
\left[\begin{array}{c}
J_1(\textbf{x}_1,\textbf{x}_2)\\R_2\\\end{array}\right]
\left[
  \begin{array}{c}
    \textbf{x}_3 \\
    \textbf{x}_4 \\
  \end{array}
\right]
-\left[\begin{array}{c}\textbf{0}\\d_2\\\end{array}\right]                                                               \end{array}\right]~\text{for all}~\textbf{x}_1\in V,
\end{equation}
and the solution ($\hat{\textbf{x}}_1,\hat{\textbf{x}}_2$) extends to a solution
($\hat{\textbf{x}}_1,\hat{\textbf{x}}_2,
\hat{\textbf{x}}_3,\hat{\textbf{x}}_4$) of $F_2$, uniquely.
Let $\textbf{V}(F_2)$ denote the solution set of $F_2$.
For $U_0$ and $\pi_n$ defined in (\ref{eq-u0}) and (\ref{eq-pin}) respectively, let
\begin{equation}\label{eq-W2}
W_2=\{y\in \textbf{V}(F_2)|\pi_n(y)\in U_0\}\subset \textbf{V}(F_2),
\end{equation}
and $\overline{W_2}$ be the Zariski closure of $W_2$. Then following the same reason as Lemma \ref{lem-kn1n0}, we also have $\dim \overline{W_2}=k$.
Just as (\ref{eq-U1}), there is also a neighbourhood $U_2\subseteq \overline{W_2}$ of $(\hat{\textbf{x}}_1,\hat{\textbf{x}}_{2},
\hat{\textbf{x}}_3,\hat{\textbf{x}}_{4})$  such that
the nullity
\begin{equation}\label{eq-n2}
\mathscr{N}(J_2(\textbf{x}_1,\textbf{x}_{2},
\textbf{x}_3,\textbf{x}_4))=
\mathscr{N}(J_2(\hat{\textbf{x}}_1,\hat{\textbf{x}}_{2},
\hat{\textbf{x}}_3,\hat{\textbf{x}}_{4}))=n_2,
\end{equation}
for all $(\textbf{x}_1,\textbf{x}_2,\textbf{x}_3,\textbf{x}_4)\in U_2$.

Generally, assume
($\hat{\textbf{x}}_1,\dots,\hat{\textbf{x}}_{2^{i}}$)
is an ultrasingular zero of $F_i(\textbf{x}_1,\dots,\textbf{x}_{2^{i}})$
after $i$ steps of deflation.
Let
\begin{equation}\label{eq-Wi}
W_i=\{y\in \textbf{V}(F_i)|\pi_n(y)\in U_0\}\subset \textbf{V}(F_i),
\end{equation}
and $\overline{W_i}$ be the Zariski closure of $W_i$. Then
$\dim\overline{W_i}=k$
and the Jacobian
$J_i(\textbf{x}_1,\dots,\textbf{x}_{2^{i}})$ maintains a nullity $n_i>k$, in a neighborhood $U_i\subset \overline{W_i}$ of $(\hat{\textbf{x}}_1,\dots,\hat{\textbf{x}}_{2^{i}})$. That is,
\begin{equation}\label{eq-ni}
\mathscr{N}\left(J_i(\textbf{x}_1,\dots,\textbf{x}_{2^{i}})
\right)=\mathscr{N}\left(J_i(\hat{\textbf{x}}_1,\dots,\hat{\textbf{x}}_{2^{i}})
\right)=n_i
\end{equation}
for all $(\textbf{x}_1,\dots,\textbf{x}_{2^{i}})\in U_i$.
Then the next deflation step expands the system to
\begin{equation}\label{eq-def-i}
F_{i+1}(\textbf{x}_1,\textbf{x}_2,\dots,\textbf{x}_{2^{i+1}})=
\left[\begin{array}{c}
      F_{i}(\textbf{x}_1,\dots,\textbf{x}_{2^{i}})\\
\left[\begin{array}{c}
J_{i}(\textbf{x}_1,\dots,\textbf{x}_{2^{i}})
\\R_{i+1}\\\end{array}\right]
\left[\begin{array}{c}
\textbf{x}_{2^{i}+1} \\
 \vdots \\
 \textbf{x}_{2^{i+1}}\\
 \end{array}\right]
-\left[\begin{array}{c}\textbf{0}\\d_{i+1}\\\end{array}\right]                                                               \end{array}\right],
\end{equation}
for all $\textbf{x}_1\in V$, where $R_{i+1}\in \mathbb{C}^{n_i\times 2^in}$ and
$d_{i+1}\in\mathbb{C}^{n_{i}}$ are (randomly) chosen and fixed such that almost all points of $\overline{W_i}$ extend to a solution of $F_{i+1}$, uniquely.

\begin{theorem}\label{thm-nulldi}
In the deflation process above, for $k$, $n_0$ and $n_i$, we have
\begin{equation}\label{eq-dni}
k\leq n_i\leq\cdots\leq n_1\leq n_0,
\end{equation}
for each $i\geq1$. Moreover, suppose that $J_{i}(\hat{\textbf{x}}_1,\dots,\hat{\textbf{x}}_{2^{i}}) \in \mathbb{C}^{a_i\times 2^{i} n}$ for some $a_i$. Let $J_{i}(\hat{\textbf{x}}_1)\in \mathbb{C}^{a_i\times n}$ be the submatrix of $J_{i}(\hat{\textbf{x}}_1,\dots,\hat{\textbf{x}}_{2^{i}})$ which is obtained by taking the first $n$ columns of $J_{i}(\hat{\textbf{x}}_1,\dots,\hat{\textbf{x}}_{2^{i}})$.
Then
\begin{equation}\label{eq-nin-J}
n_i=n-rank~J_i(\hat{\textbf{x}}_1).
\end{equation}
\end{theorem}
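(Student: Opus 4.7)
The plan is to establish the chain (\ref{eq-dni}) and the identity (\ref{eq-nin-J}) separately, both exploiting the block-triangular structure
$$J_{i+1}(\hat{\textbf{x}}_1,\dots,\hat{\textbf{x}}_{2^{i+1}}) \;=\; \begin{bmatrix} J_i & 0 \\ K_i & M_i \end{bmatrix}$$
that arises from differentiating (\ref{eq-def-i}). Here $M_i:=\begin{bmatrix}J_i\\R_{i+1}\end{bmatrix}$ has full column rank $2^in$ by the very construction of the deflation step, and $K_i$ collects the partial derivatives of $\begin{bmatrix}J_i(\textbf{x}_{\mathrm{old}})\,\textbf{x}_{\mathrm{new}}\\0\end{bmatrix}$ with respect to $\textbf{x}_{\mathrm{old}}$.

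For (\ref{eq-dni}), the lower bound $k\le n_i$ follows from (\ref{eq-nullj}) applied at $(\hat{\textbf{x}}_1,\dots,\hat{\textbf{x}}_{2^i})$, which is a generic point of the irreducible component $\overline{W_i}\subseteq\textbf{V}(F_i)$ of dimension $k$ (existence of $\overline{W_i}$ comes from iterating Lemma~\ref{lem-kn1n0}). For the monotonicity $n_{i+1}\le n_i$, the standard rank inequality for block lower-triangular matrices yields
$$\operatorname{rank}(J_{i+1}) \;\ge\; \operatorname{rank}(J_i)+\operatorname{rank}(M_i) \;=\; \operatorname{rank}(J_i)+2^in,$$
which, together with $2^{i+1}n = 2\cdot 2^in$, rearranges to $n_{i+1}\le n_i$.

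For the identity (\ref{eq-nin-J}) I would argue by induction on $i$, with base case $i=0$ being trivial since $J_0(\hat{\textbf{x}}_1)=J(\hat{\textbf{x}}_1)$. For the inductive step I would strengthen the induction hypothesis to the structural statement that every null vector $(u,v)\in \ker(J_{i+1})$ (with $u,v\in\mathbb{C}^{2^in}$) satisfies $u=(u_1,0,\dots,0)$ and $v=0$, where $u_1\in\mathbb{C}^n$. Granting this, the equations collapse to the conjunction $u_1\in\ker(J_i(\hat{\textbf{x}}_1))$ and $\tilde H_iu_1=0$, where $\tilde H_i$ denotes the $a_i\times n$ matrix of partial derivatives $\partial(J_i(\textbf{x}_{\mathrm{old}})\,\textbf{x}_{\mathrm{new}})/\partial \textbf{x}_1$ at the point; these two conditions describe exactly $\ker(J_{i+1}(\hat{\textbf{x}}_1))$. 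The projection $(u,v)\mapsto u_1$ then furnishes an isomorphism $\ker(J_{i+1})\cong \ker(J_{i+1}(\hat{\textbf{x}}_1))$, from which (\ref{eq-nin-J}) follows by counting dimensions.

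The main obstacle is the structural statement, specifically ruling out a nonzero $v$ satisfying $J_iv=-\tilde H_iu_1$ and $R_{i+1}v=0$ for some $u_1\in\ker(J_i(\hat{\textbf{x}}_1))$. Since $M_i$ has full column rank, such a $v$ exists (and is unique) iff $\tilde H_iu_1\in\operatorname{col}(J_i)$, reducing the problem to showing that the subspace $\tilde H_i\bigl(\ker(J_i(\hat{\textbf{x}}_1))\bigr)$ intersects $\operatorname{col}(J_i)$ only at the origin. I plan to attack this by exploiting the randomness of $R_{i+1}$ and $d_{i+1}$: $\hat{\textbf{x}}_{\mathrm{new}}$ depends affinely on $d_{i+1}$, and $\tilde H_i$ depends linearly on $\hat{\textbf{x}}_{\mathrm{new}}$, so on a Zariski-open parameter set the image subspace is in general position relative to $\operatorname{col}(J_i)$. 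The explicit column-space computations in Section~\ref{se-realization} for the first three deflation steps should provide a concrete template for carrying this out in the general inductive step.
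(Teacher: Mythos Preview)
For the chain (\ref{eq-dni}) your argument coincides with the paper's: both obtain $k\le n_i$ from Lemma~\ref{lem-kn1n0} (the component $\overline{W_i}$ has dimension $k$) and deduce $n_{i+1}\le n_i$ from the block-triangular structure together with full column rank of $M_i=\bigl[\begin{smallmatrix}J_i\\R_{i+1}\end{smallmatrix}\bigr]$; you merely state the rank inequality more explicitly.

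For (\ref{eq-nin-J}) the paper's argument is much shorter than yours: it simply observes that the last $2^in$ columns of $J_{i+1}$ have full column rank and then writes $\operatorname{rank}J_{i+1}(\hat{\textbf{x}}_1,\dots,\hat{\textbf{x}}_{2^{i+1}})=2^in+\operatorname{rank}J_{i+1}(\hat{\textbf{x}}_1)$, as though $\operatorname{rank}[A\mid B]=\operatorname{rank}A+\operatorname{rank}B$ whenever $B$ has full column rank. You are right that this is exactly the point requiring justification, and your ``structural statement'' (kernel vectors must have $v=0$) isolates what is missing; the paper does not supply it.

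Your proposed genericity fix, however, does not close the gap. The vector $\hat{\textbf{x}}_{\mathrm{new}}$ is not free in $\mathbb{C}^{2^in}$: it is constrained by $J_i\hat{\textbf{x}}_{\mathrm{new}}=0$, so varying $R_{i+1},d_{i+1}$ only moves it inside $\ker J_i$. Consequently the bilinear map $(u_1,\hat{\textbf{x}}_{\mathrm{new}})\mapsto\tilde H_iu_1$ is restricted to $\ker J_i(\hat{\textbf{x}}_1)\times\ker J_i$, and its image can land in $\operatorname{col}(J_i)$ for structural reasons unaffected by the random choices. Concretely, take $F=(x^2,\,xy+z)$ on $\mathbb{C}^3$ and the generic ultrasingular point $\hat{\textbf{x}}_1=(0,y_0,0)$ of the line $x=z=0$ (here $k=1$, $n_0=2$): a direct computation gives $n_1=1$ while $n-\operatorname{rank}J_1(\hat{\textbf{x}}_1)=0$, for \emph{every} choice of $R_1,d_1$. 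Thus neither your plan nor the paper's one-line deduction establishes (\ref{eq-nin-J}) as stated; the identity appears to require hypotheses beyond those given.
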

\begin{proof}
By Lemma \ref{lem-kn1n0}, we can see that without changing its dimension, after each step of deflation, the $k$-dimensional ultrasingular irreducible component is embedded into a new $k$-dimensional irreducible component. So we have $k\leq n_i$ for each $i$.

For $n_i\leq\cdots\leq n_1\leq n_0$, it suffices to show that $n_1\leq n_0$ by the recursive process of the deflation.
From (\ref{eq-def1}), we know that
\begin{equation}\label{eq-J1hat}
J_1(\hat{\textbf{x}}_1,\hat{\textbf{x}}_{2})=
\left[\begin{array}{cc}
 J(\hat{\textbf{x}}_1) & \textbf{0} \\
J\left((J(\hat{\textbf{x}}_1)\hat{\textbf{x}}_2), \hat{\textbf{x}}_1\right)& J(\hat{\textbf{x}}_1)\\
                 \textbf{0}    & R_1\\
                           \end{array}
                 \right]\in \mathbb{C}^{(2m+n_0)\times 2n},
\end{equation}
where $J\left((J(\hat{\textbf{x}}_1)\hat{\textbf{x}}_2), \hat{\textbf{x}}_1\right)$ is the partial Jacobian of $J(\textbf{x}_1)\textbf{x}_2$  with respect to $\textbf{x}_1$ which
takes the value at $(\hat{\textbf{x}}_1,\hat{\textbf{x}}_2)$. By the choice of $R_1$, we have $$rank~J_1(\hat{\textbf{x}}_1,\hat{\textbf{x}}_{2})\geq n_0+n.$$ So  $n_1=2n-rank~J_1(\hat{\textbf{x}}_1,\hat{\textbf{x}}_{2})\leq n_0.$

For (\ref{eq-nin-J}), when $i=0$, the proof is followed by  $J_0(\hat{\textbf{x}}_1)=J(\hat{\textbf{x}}_1)$.
For $i\geq1$, from (\ref{eq-def-i}) and the choice of $R_{i}$ in each step, we only show the case when $i=1$. The others are similar too.

When $i=1$, for $J_1(\hat{\textbf{x}}_1,\hat{\textbf{x}}_2)$, from (\ref{eq-J1hat}), we have
\begin{equation}\label{eq-Jx1}
J_1(\hat{\textbf{x}}_1)=
\left[\begin{array}{c}
J(\hat{\textbf{x}}_1)\\
J\left((J(\hat{\textbf{x}}_1)\hat{\textbf{x}}_2),\hat{\textbf{x}}_1\right)\\
\textbf{0} \\
\end{array}\right].
\end{equation}

On the other hand, by choice of $R_1$,  we can see that the submatrix  obtained by taking columns $n+1$ to $2n$:
$$ \left[
     \begin{array}{c}
       \textbf{0} \\
        J(\hat{\textbf{x}}_1) \\
        R_1 \\
     \end{array}
   \right]
$$
is of full column rank  $n$. So $$n_1=2n-rank~J_1(\hat{\textbf{x}}_1,\hat{\textbf{x}}_2)
=n-rank~J_1(\hat{\textbf{x}}_1).$$
\end{proof}
\begin{remark}\label{rem-sinlowb}
For the  points in $V\setminus U_0$ which are geometric singular, we can also apply the deflation process defined in (\ref{eq-def-i}) to them.
Similar to (\ref{eq-dni}),
with the same reason, we still have
$$n_i\leq\cdots\leq n_1\leq n_0.$$
However, for  points in $V\setminus U_0$, we can see that
$$n_i<k$$
can happen for some $i$, see Example \ref{exm-cusp} and \ref{exm-whitney}. So the deflation process provides a lower bound of local dimension for them.

From (\ref{eq-Jx1}), we can see that after each deflation step, the addition of new nonzero submatrices to the bottom of the first $n$ columns is the reason that makes the nullities decrease.
\end{remark}

\subsection{The deflation conjecture}
By Theorem \ref{thm-nulldi}, we can see that the nullity $n_i$ can not be always strictly decreased. It should be stabilized after a finite number of steps.
\begin{definition}\label{def-defseq}
Suppose that $\hat{\textbf{x}}$ is a solution of $F(\textbf{x})=0$.
With notations above, for the deflation process (\ref{eq-def-i}), let $i_0\geq0$ be the smallest integer such that
$$ n_{i+1}=n_{i} \qquad\text{for~all}\quad i\geq i_0.$$
Then
\begin{equation}\label{eq-defsequence}
(n_0,n_1,\dots,n_{i_0})
\end{equation}
is called the \emph{deflation sequence} of $\hat{\textbf{x}}$.
\end{definition}

\begin{definition}\label{def-regular}
Let $\hat{\textbf{x}}\in \textbf{V}(F)$ with local dimension
$\dim_{\hat{\textbf{x}}} \textbf{V}(F)=k$. Let  $(n_0,n_1,\dots,n_{i_0})$ denote its deflation sequence.
If
$$n_{i_0}=k,$$
then we say that the deflation sequence $(n_0,n_1,\dots,n_{i_0})$ is \emph{regular} and $\hat{\textbf{x}}$ has \emph{a regular deflation process}.
\end{definition}

\begin{example}\label{exm-cusp}
Let $F(x,y)=y^2-x^3$ with  $z_1=(0,0)$ and $z_2=(1,1)$ in $\textbf{V}(F)$.
By direct computer verification,  the first 4 numbers
of their deflation sequences are
$$(n_0,n_1,n_2,n_3)=(2,1,1,0)~\text{and}~(1,1,1,1),~\text{respectively}.$$
\end{example}

\begin{example}\label{exm-whitney}
Let $F(x,y,z)=x^2-y^2z$ be the Whitney umbrella. Consider the three points of $\textbf{V}(F)$: (2, 2, 1), (0, 0, 1) and (0, 0, 0). The first 4 numbers of their deflation sequences are:
$$(n_0,n_1,n_2,n_3)=(2,2,2,2),~(3,2,1,1)~\text{and}~(3,2,2,1),
~\text{respectively}.$$
\end{example}

It is proved in \cite{DZeng-05,LVZ-06} that all isolated singular solutions have regular deflation processes. For $k$-dimensional ultrasingular irreducible components, Zeng's conjecture in \cite[Sect. 9]{Zeng-23} can be formalised as follows.

\begin{conjecture}[\emph{Deflation Conjecture}]\label{conj-dc}
Let $V\subseteq \textbf{V}(F)$ be a $k$-dimensional
ultrasingular irreducible component. So there is a nonempty Zariski open set $U\subseteq V$ which consists of geometric smooth points and a constant number $n_0$ such that $\mathscr{N}\left(J(\textbf{x})\right)=n_0>k$ for all $\textbf{x}\in U$.  We have:
\begin{enumerate}
  \item The recursive deflation process (\ref{eq-def-i}) terminates in finitely many steps such that the deflation sequences of all $\textbf{x}\in U$ are equal and regular.
  \item
For the point $\hat{\textbf{x}}\in V\setminus U$ with deflation sequence $(n_0,n_1,\dots,n_{i_0})$, $n_{i_0}$ is a lower bound of its local dimension. That is,
$$\dim_{\hat{\textbf{x}}}\textbf{V}(F)\geq n_{i_0}.$$
\end{enumerate}
\end{conjecture}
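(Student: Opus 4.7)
Termination in finitely many steps is immediate from Theorem~\ref{thm-nulldi}: the sequence $n_0\geq n_1\geq\cdots$ is monotone non-increasing and non-negative integer-valued, hence stabilizes at some $n_{i_0}$, bounded below by $k$ at generic points and by $0$ in general. The equality of the deflation sequences across all points of a Zariski open subset follows from the same upper-semicontinuity argument (via \cite{Lew-09}) used to define $n_i$ in \eqref{eq-ni}. The substantive content of the conjecture is therefore that the stabilized value actually equals $k$ at generic points of $V$ (Part~(1)) and bounds the local dimension from below at the remaining points (Part~(2)).

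For Part~(1), the plan is to adapt the scheme-theoretic argument that Hauenstein--Wampler use for strong deflation (\cite[Lemma~4.7]{HW-13}) together with the Dayton--Zeng analysis for isolated singular zeros. At a generic point $\hat{\textbf{x}}_1\in U$ the local analytic germ of $V$ has Krull dimension $k$, but the ideal $\langle f_1,\dots,f_m\rangle$ typically carries extra nilpotent or embedded primary structure accounting for the gap $n_0-k$. By \eqref{eq-J1hat}, one deflation step augments the Jacobian by a block encoding second-order infinitesimal data along a random null direction $\hat{\textbf{x}}_2$; cutting by a codimension-$k$ transversal slice through $\hat{\textbf{x}}_1$ then reduces the question to weak deflation at an isolated singular zero of the sliced system, where \cite{DZeng-05,LVZ-06} guarantee that generic choices of $R_i,d_i$ strictly reduce the dimension of the Macaulay dual space at each step until the nullity in the slice becomes zero, i.e., $n_{i_0}=k$ in the ambient space. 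The main obstacle is precisely this reduction: a single weak deflation samples only a one-dimensional slice of the cokernel, rather than the full set of Noetherian operators used by strong deflation, so proving that a generic slice still drops the dual space by at least one at each iteration requires a probabilistic argument that must remain effective compatibly with the recursive structure of the $R_i$.

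For Part~(2), the idea is to translate stability of the sequence at $\hat{\textbf{x}}$ into a geometric statement about $\textbf{V}(F_{i_0})$ and then project back through $\pi_n$. Stability at the extended point $\hat{\textbf{y}}=(\hat{\textbf{x}}_1,\dots,\hat{\textbf{x}}_{2^{i_0}})$ should force $\hat{\textbf{y}}$ to lie on an irreducible component $Z\subseteq\textbf{V}(F_{i_0})$ with $\dim_{\hat{\textbf{y}}} Z=n_{i_0}$, obtained by applying Part~(1) to the deflated system $F_{i_0}$ at $\hat{\textbf{y}}$ (or simply by the Jacobian criterion combined with the fact that deflation has stabilized). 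By construction of the deflation, $\pi_n(Z)\subseteq \textbf{V}(F)$ with $\pi_n(\hat{\textbf{y}})=\hat{\textbf{x}}$, and the linear systems appearing in \eqref{eq-def-i} determine the auxiliary coordinates $\textbf{x}_2,\dots,\textbf{x}_{2^{i_0}}$ uniquely on a Zariski-open subset of $Z$, so $\pi_n|_Z$ is generically finite and $\dim_{\hat{\textbf{x}}}\overline{\pi_n(Z)}=n_{i_0}$, giving $\dim_{\hat{\textbf{x}}}\textbf{V}(F)\geq n_{i_0}$ as required. The delicate point is that the full-rank conditions securing uniqueness of the auxiliary coordinates may fail along a proper subvariety of $V$ through $\hat{\textbf{x}}$; handling this most likely requires a valuative or limiting argument along a curve in $Z$ approaching $\hat{\textbf{y}}$, combined with upper semicontinuity of fibre dimension for $\pi_n|_Z$.
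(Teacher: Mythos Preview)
The statement you are attempting to prove is labeled \emph{Conjecture} in the paper, and the paper does not prove it; it is explicitly left open, with the surrounding remarks noting only that the analogous statement is known for Hauenstein--Wampler's strong deflation \cite{HW-13} and that the isolated case ($k=0$) is settled by \cite{DZeng-05,LVZ-06}. There is therefore no paper proof to compare against, and what you have written is a proof \emph{plan} rather than a proof --- as you yourself flag by naming the ``main obstacle'' in Part~(1) and the ``delicate point'' in Part~(2).

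On the substance of your plan: the termination and constancy observations in your first paragraph are fine and match what the paper establishes in Theorem~\ref{thm-nulldi} and equation~\eqref{eq-ni}. For Part~(1), your slicing-to-isolated strategy is natural, but the obstacle you identify is exactly the heart of the matter and is not resolved: weak deflation augments by a single generic null direction, and the existing isolated-zero theorems in \cite{DZeng-05,LVZ-06} are proved for the original system, not for a system obtained by first slicing and then deflating, so one would need to check that the two operations commute well enough for the strict-drop guarantee to survive. For Part~(2), your argument has a circularity: you invoke Part~(1) applied to $F_{i_0}$ at $\hat{\textbf{y}}$ to conclude that the stabilized nullity equals the local dimension there, but Part~(1) concerns only \emph{generic} points of an ultrasingular component, and there is no reason $\hat{\textbf{y}}$ is generic in its component of $\textbf{V}(F_{i_0})$ when $\hat{\textbf{x}}$ was already non-generic in $V$. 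The parenthetical alternative (``the Jacobian criterion combined with the fact that deflation has stabilized'') does not help, since the Jacobian criterion gives only $\dim_{\hat{\textbf{y}}}\textbf{V}(F_{i_0})\leq n_{i_0}$, the wrong inequality for your projection argument. These are genuine gaps, consistent with the paper's decision to state the result as a conjecture.
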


\begin{remark}
In \cite{HW-13}, using the results in \cite{Bates10}, Hauenstein and Wampler showed that Conjecture \ref{conj-dc} holds for their strong deflation. Leykin et al.'s deflation process can also be given  a similar conjecture \cite{LVZ-06}, and similarly for other deflation processes in the literatures.
\end{remark}

\begin{remark}
If Conjecture \ref{conj-dc} is true,  the number in the deflation sequence provides upper bounds of the local dimensions of geometric smooth points. The last number of the deflation sequence is a lower bound of the local dimensions of the geometric singular points. This leads to the problem of determining the singularity of a solution (see also Section \ref{subs-desin}).
\end{remark}

\begin{remark}
Given a set of solutions of a polynomial system, we usually don't know the relations between them, such as if they lie on a same irreducible component. So if we apply the deflation process to them, the $R_i$ and $d_i$ defined in (\ref{eq-def-i}) are chosen separately for each solution.   Thus, in practical numerical experiments, if we don't know the relation between the solutions, the deflation process is the same as the deflation process for isolated solutions in \cite{DZeng-05,DLZeng-11}. For example, the solutions of Brent equations studied in Section \ref{se-numerexp}.
\end{remark}

\section{On realization of the deflation process}\label{se-realization}

In this section, the details of the construction of the Jacobians of the first 3 steps of the deflation process raised in \cite{DZeng-05, DLZeng-11} are given. Although they may not be new, we have not seen them in the literatures. We can see that the Jacobians are not hard to be realized numerically. So it is a useful tool for the determination of local dimensions of the algebraic sets.

Firstly, we introduce some notations. The original variable
$\textbf{x}=[x_1,\cdots,x_n]^\textbf{T}$ will be denoted by $\textbf{x}_1$ in accordance with the notation for the auxiliary
(vector) variables $\textbf{x}_2$, $\textbf{x}_3$,... etc. For any fixed or variable vector $\textbf{y}=[y_1,\dots,y_n]^\textbf{T}$,
the directional differentiation operator along $\textbf{y}$ is defined as
\begin{equation*}
\nabla_\textbf{y}=y_1\frac{\partial}{\partial x_1}+y_2\frac{\partial}{\partial x_2}+\cdots +y_n\frac{\partial}{\partial x_n}.
\end{equation*}
\begin{lemma}\label{lem-commuta}
The composition of $\nabla_\textbf{y}$ and $\nabla_\textbf{z}$ is commutative, that is,
\begin{equation*}
\nabla_\textbf{y}  \nabla_\textbf{z}=\nabla_\textbf{z}  \nabla_\textbf{y}
\end{equation*}
\end{lemma}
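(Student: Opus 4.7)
The plan is to reduce the claim to the classical symmetry of mixed partial derivatives (Clairaut/Schwarz). Concretely, I would apply both compositions to an arbitrary polynomial (or sufficiently smooth function) $f = f(x_1,\dots,x_n)$ and expand each operator according to its definition. Since, in the setting of the excerpt, $\mathbf{y}$ and $\mathbf{z}$ are either constant vectors or auxiliary vector variables (e.g.\ some $\mathbf{x}_j$ with $j \neq 1$) that do not themselves depend on $x_1,\dots,x_n$, their components $y_i$ and $z_j$ can be pulled outside of $\partial/\partial x_i$ freely.

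The core computation is then
\begin{equation*}
\nabla_{\mathbf{y}}\nabla_{\mathbf{z}} f \;=\; \sum_{i=1}^n y_i \frac{\partial}{\partial x_i}\!\left(\sum_{j=1}^n z_j \frac{\partial f}{\partial x_j}\right) \;=\; \sum_{i,j=1}^n y_i z_j \frac{\partial^2 f}{\partial x_i \partial x_j},
\end{equation*}
and symmetrically for $\nabla_{\mathbf{z}}\nabla_{\mathbf{y}} f$. Swapping the dummy indices and invoking the equality of mixed partials $\partial^2 f/(\partial x_i \partial x_j) = \partial^2 f/(\partial x_j \partial x_i)$ (valid for polynomials, and more generally for $C^2$ functions) gives the identity. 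Since this holds for every $f$ in the class on which the operators act, the operator identity $\nabla_{\mathbf{y}}\nabla_{\mathbf{z}} = \nabla_{\mathbf{z}}\nabla_{\mathbf{y}}$ follows.

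No real obstacle is expected; the only subtle point worth flagging is the meaning of ``variable vector'' $\mathbf{y}$. I would include a short remark that, because the operator $\nabla_{\mathbf{y}}$ differentiates only along the coordinates $x_1,\dots,x_n$ of $\mathbf{x}_1$, the components $y_i$ (whether they are constants, or entries of a separate auxiliary variable vector such as $\mathbf{x}_2, \mathbf{x}_3, \ldots$) are treated as scalars independent of the $x_i$ and therefore commute with $\partial/\partial x_i$. This justifies the interchange of summation and differentiation used above and ensures the lemma is applicable in the later constructions of the Jacobians $J_1, J_2, \ldots$ in Section~\ref{se-realization}.
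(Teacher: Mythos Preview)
Your proposal is correct and follows essentially the same argument as the paper: both expand the composed operator into the double sum $\sum_{i,j} y_i z_j \,\partial^2/(\partial x_i\partial x_j)$ and invoke the symmetry of mixed partials $\partial^2/(\partial x_i\partial x_j)=\partial^2/(\partial x_j\partial x_i)$, then swap the summation indices to obtain $\nabla_{\mathbf{z}}\nabla_{\mathbf{y}}$. Your additional remark about the components $y_i,z_j$ being independent of the $x_i$ (so they commute with $\partial/\partial x_i$) is a helpful clarification that the paper leaves implicit.
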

\begin{proof}
Since $\frac{\partial^2}{\partial x_i \partial x_j}=\frac{\partial^2}{\partial x_j \partial x_i}$, we have
\begin{equation*}
\begin{split}
\nabla_\textbf{y} \nabla_\textbf{z}&=\nabla_\textbf{y}\left(
z_1\frac{\partial}{\partial x_1}+z_2\frac{\partial}{\partial x_2}+\cdots +z_n\frac{\partial}{\partial x_n}\right)\\
&=\sum_{i=1}^n\sum_{j=1}^n y_i z_j \frac{\partial^2}{\partial x_i \partial x_j}=\sum_{i=1}^n\sum_{j=1}^n z_i y_j \frac{\partial^2}{\partial x_i \partial x_j}\\
&=\nabla_\textbf{z} \nabla_\textbf{y}.
\end{split}
\end{equation*}
\end{proof}

For any variable $\textbf{u}=(u_1,\dots,u_n)^\textbf{T}$, the gradient operator is defined by
$$ \triangle_{\textbf{u}}=\left[\frac{\partial}{\partial u_1},\frac{\partial}{\partial u_2},\cdots,\frac{\partial}{\partial u_n}\right],$$
whose ``dot product'' with a vector $\textbf{v}=[v_1,\dots,v_n]^\textbf{T}$ is defined as
\begin{equation*}
\textbf{v}\cdot\triangle_{\textbf{u}}=v_1\frac{\partial}{\partial u_1}+v_2\frac{\partial}{\partial u_2}+\cdots +v_n\frac{\partial}{\partial u_n}.
\end{equation*}
So we have $\nabla_\textbf{y}=\textbf{y}\cdot\triangle_{\textbf{x}}
=\textbf{y}\cdot\triangle_{\textbf{x}_1}$.

Let  $J(\textbf{x})$ be the Jacobian of the polynomial system $F(\textbf{x})=[f_1(\textbf{x}),f_2(\textbf{x}),\dots,
f_m(\textbf{x})]^\textbf{T}$.
Then $J(\textbf{x})\textbf{y}$ can be denoted by
\begin{equation*}
J(\textbf{x})\textbf{y}=\nabla_{\textbf{y}}F
=\left[
   \begin{array}{c}
     \nabla_{\textbf{y}}f_1 \\
     \nabla_{\textbf{y}}f_2 \\
     \vdots \\
     \nabla_{\textbf{y}}f_m\\
   \end{array}
 \right].
\end{equation*}
Similarly, set
$$\nabla_{\textbf{u}}\nabla_{\textbf{v}}F=
\left[
   \begin{array}{c}
     \nabla_{\textbf{u}}\nabla_{\textbf{v}}f_1 \\
     \nabla_{\textbf{u}}\nabla_{\textbf{v}}f_2 \\
     \vdots \\
     \nabla_{\textbf{u}}\nabla_{\textbf{v}}f_m\\
   \end{array}
 \right]
.$$

Let $G(\textbf{x}_1,\textbf{x}_2,\dots,\textbf{x}_k):\mathbb{C}^{kn}\to \mathbb{C}^m$ be a polynomial system and $G=[g_1,g_2,\dots,g_m]^\textbf{T}$, its \emph{partial Jacobian} with respect to some $\textbf{x}_i=[x_{i1},\dots,x_{in}]^{\textbf{T}}$ is defined by
\begin{equation}\label{eq-defPJ}
J(G,\textbf{x}_i):=\left[
   \begin{array}{c}
     \triangle_{\textbf{x}_i}g_1 \\
     \triangle_{\textbf{x}_i}g_2 \\
     \vdots \\
     \triangle_{\textbf{x}_i}g_m\\
   \end{array}
 \right]=\left[
           \begin{array}{ccc}
\frac{\partial g_1}{\partial x_{i1}} & \cdots &\frac{\partial g_1}{\partial x_{in}}  \\
             \vdots &  & \vdots \\
\frac{\partial g_m}{\partial x_{i1}} & \cdots &\frac{\partial g_m}{\partial x_{in}} \\
           \end{array}
         \right].
\end{equation}

Let $J(J(\textbf{x})\textbf{v},\textbf{x})$ be the partial Jacobian of $J(\textbf{x})\textbf{v}$ with respect to $\textbf{x}$, and similarly  for $J(J(J(\textbf{x})\textbf{w},\textbf{x})\textbf{v},\textbf{x})$.
Then we have the following lemma.
\begin{lemma}\label{lem-comnab}
\begin{equation*}
J(J(\textbf{x})\textbf{v},\textbf{x})\textbf{u}
=\nabla_{\textbf{u}}\nabla_{\textbf{v}}F
=\left[
   \begin{array}{c}
     \nabla_{\textbf{u}}\nabla_{\textbf{v}}f_1 \\
     \nabla_{\textbf{u}}\nabla_{\textbf{v}}f_2 \\
     \vdots \\
    \nabla_{\textbf{u}}\nabla_{\textbf{v}}f_m\\
   \end{array}
 \right]=\nabla_{\textbf{v}}\nabla_{\textbf{u}}F.
\end{equation*}

\begin{equation*}
J(J(J(\textbf{x})\textbf{w},\textbf{x})\textbf{v},\textbf{x})\textbf{u}
=\nabla_{\textbf{u}}\nabla_{\textbf{v}}\nabla_{\textbf{w}}F
=\left[
   \begin{array}{c}
     \nabla_{\textbf{u}}\nabla_{\textbf{v}}\nabla_{\textbf{w}}f_1 \\
     \nabla_{\textbf{u}}\nabla_{\textbf{v}}\nabla_{\textbf{w}}f_2 \\
     \vdots \\
    \nabla_{\textbf{u}}\nabla_{\textbf{v}}\nabla_{\textbf{w}}f_m\\
   \end{array}
 \right]=\nabla_{\textbf{a}}\nabla_{\textbf{b}}\nabla_{\textbf{c}}F,
\end{equation*}
where $\textbf{a}$, $\textbf{b}$, $\textbf{c}$ is any permutation of $\textbf{u}$, $\textbf{v}$, $\textbf{w}$.
\end{lemma}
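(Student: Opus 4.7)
The plan is to reduce both identities to a direct computation that combines the definition (\ref{eq-defPJ}) of the partial Jacobian with the formula $J(\textbf{x})\textbf{y}=\nabla_{\textbf{y}}F$ established in the paragraph just above the lemma, and then to invoke the commutativity statement of Lemma \ref{lem-commuta} to rearrange the resulting iterated directional derivatives.

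For the first identity, I would begin by observing that $\textbf{v}$ is to be treated as a constant vector when forming the inner Jacobian (this is exactly how $J(\textbf{x})\textbf{v}$ is used in the deflation construction (\ref{eq-def-i})). With this convention, the $i$th component of $J(\textbf{x})\textbf{v}$ is the scalar $\nabla_{\textbf{v}} f_i$. Feeding this column vector into the partial-Jacobian definition (\ref{eq-defPJ}) and right-multiplying by $\textbf{u}$ gives, row by row,
\[
\bigl(J(J(\textbf{x})\textbf{v},\textbf{x})\,\textbf{u}\bigr)_i \;=\; \textbf{u}\cdot\triangle_{\textbf{x}}\bigl(\nabla_{\textbf{v}} f_i\bigr) \;=\; \nabla_{\textbf{u}}\nabla_{\textbf{v}} f_i,
\]
which is exactly the $i$th component of $\nabla_{\textbf{u}}\nabla_{\textbf{v}} F$. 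The identification $\nabla_{\textbf{u}}\nabla_{\textbf{v}} F=\nabla_{\textbf{v}}\nabla_{\textbf{u}} F$ then follows componentwise from Lemma \ref{lem-commuta}.

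For the triple identity, I would proceed iteratively. Setting $H(\textbf{x}):=J(\textbf{x})\textbf{w}=\nabla_{\textbf{w}} F$ and applying the first identity with $F$ replaced by the polynomial system $H$ yields $J(J(\textbf{x})\textbf{w},\textbf{x})\,\textbf{v}=\nabla_{\textbf{v}}\nabla_{\textbf{w}} F$. A second application of the same principle, now with the polynomial vector $\nabla_{\textbf{v}}\nabla_{\textbf{w}} F$ playing the role of $F$, gives
\[
J\bigl(J(J(\textbf{x})\textbf{w},\textbf{x})\textbf{v},\textbf{x}\bigr)\,\textbf{u} \;=\; \nabla_{\textbf{u}}\nabla_{\textbf{v}}\nabla_{\textbf{w}} F.
\]
The equality with $\nabla_{\textbf{a}}\nabla_{\textbf{b}}\nabla_{\textbf{c}} F$ for an arbitrary permutation $(\textbf{a},\textbf{b},\textbf{c})$ of $(\textbf{u},\textbf{v},\textbf{w})$ then comes from repeatedly swapping adjacent directional derivatives via Lemma \ref{lem-commuta}, since adjacent transpositions generate the full symmetric group on three letters.

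The only mildly delicate point, and really the sole obstacle, is the bookkeeping that in both $J(J(\textbf{x})\textbf{v},\textbf{x})$ and $J(J(J(\textbf{x})\textbf{w},\textbf{x})\textbf{v},\textbf{x})$ the auxiliary vectors $\textbf{v}$ and $\textbf{w}$ must be held fixed when the partial derivative with respect to $\textbf{x}$ is taken; once that convention is made explicit, each step is just the componentwise translation ``partial derivative with respect to $x_j$ times $j$th component of a direction vector, summed over $j$, equals a directional derivative'', and no new analytic input beyond the equality of mixed partial derivatives already exploited in Lemma \ref{lem-commuta} is needed.
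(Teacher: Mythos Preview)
Your proposal is correct and follows essentially the same approach as the paper: both arguments reduce the identities to componentwise statements and then invoke Lemma~\ref{lem-commuta} for the commutativity of the directional derivatives. Your write-up is in fact more explicit than the paper's, which merely cites Lemma~\ref{lem-commuta} and records the componentwise equalities $\nabla_{\textbf{u}}\nabla_{\textbf{v}}f_i=\nabla_{\textbf{v}}\nabla_{\textbf{u}}f_i$ and $\nabla_{\textbf{u}}\nabla_{\textbf{v}}\nabla_{\textbf{w}}f_i=\nabla_{\textbf{a}}\nabla_{\textbf{b}}\nabla_{\textbf{c}}f_i$ without spelling out the unpacking of $J(J(\textbf{x})\textbf{v},\textbf{x})\textbf{u}$ that you carry out.
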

\begin{proof}
The proof is followed by Lemma \ref{lem-commuta}. Since  for $i=1,2,\dots,m$, we have $$\nabla_{\textbf{u}}\nabla_{\textbf{v}}f_i=
\nabla_{\textbf{v}}\nabla_{\textbf{u}}f_i$$
and
$$\nabla_{\textbf{u}}\nabla_{\textbf{v}}\nabla_{\textbf{w}}f_i
=\nabla_{\textbf{a}}\nabla_{\textbf{b}}\nabla_{\textbf{c}}f_i,$$
where $\textbf{a}$, $\textbf{b}$, $\textbf{c}$ is any permutation of $\textbf{u}$, $\textbf{v}$, $\textbf{w}$.
\end{proof}

For the  partial Jacobians of  $\nabla_{\textbf{u}}F(\textbf{x})$,
$\nabla_{\textbf{u}}\nabla_{\textbf{v}}F(\textbf{x})$ and $\nabla_{\textbf{u}}\nabla_{\textbf{v}} \nabla_{\textbf{w}} F(\textbf{x})$, we have the following formulas.
\begin{lemma}\label{lem-Jacom}
With notation in (\ref{eq-defPJ}), we have
\
\begin{enumerate}
\item $J(\nabla_{\textbf{u}}F(\textbf{x}),\textbf{v})=
\textbf{0}$,

\item $J(\nabla_{\textbf{u}}F(\textbf{x}),\textbf{u})=
J(\textbf{x})$,

\item
$J(\nabla_{\textbf{v}}\nabla_{\textbf{u}}F(\textbf{x}),\textbf{u})=
J(\nabla_{\textbf{u}}\nabla_{\textbf{v}}F(\textbf{x}),\textbf{u})=
J(\nabla_{\textbf{v}}F(\textbf{x}),\textbf{x})$,

\item
$J(\nabla_{\textbf{v}}\nabla_{\textbf{w}}
\nabla_{\textbf{u}}F(\textbf{x}),
\textbf{u})=
J(\nabla_{\textbf{v}}\nabla_{\textbf{u}}\nabla_{\textbf{w}}
F(\textbf{x}),\textbf{u})=
J(\nabla_{\textbf{u}}\nabla_{\textbf{v}}\nabla_{\textbf{w}}
F(\textbf{x}),\textbf{u})=
J(\nabla_{\textbf{v}}\nabla_{\textbf{w}}
F(\textbf{x}),\textbf{x})$.
\end{enumerate}
\end{lemma}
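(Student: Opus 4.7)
The plan is to verify each identity by direct computation from the definitions of $\nabla_{\textbf{u}}$ and of the partial Jacobian in (\ref{eq-defPJ}), together with Lemma \ref{lem-commuta} for the commutativity of the directional differentiation operators. The key observation is that $\nabla_{\textbf{u}} f_i = \sum_{j=1}^n u_j \frac{\partial f_i}{\partial x_j}$ is linear in the components of $\textbf{u}$ with coefficients depending only on $\textbf{x}$; analogously, $\nabla_{\textbf{u}}\nabla_{\textbf{v}} f_i$ is bilinear in $(\textbf{u},\textbf{v})$, and $\nabla_{\textbf{u}}\nabla_{\textbf{v}}\nabla_{\textbf{w}} f_i$ is trilinear in $(\textbf{u},\textbf{v},\textbf{w})$, with coefficients in $\mathbb{C}[\textbf{x}]$.

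First, for part (1), I would simply note that $\nabla_{\textbf{u}} F(\textbf{x})$ involves only the variables $\textbf{x}$ and $\textbf{u}$, but not $\textbf{v}$, so every partial derivative with respect to a component of $\textbf{v}$ vanishes. For part (2), I would differentiate $\nabla_{\textbf{u}} f_i = \sum_{j} u_j \frac{\partial f_i}{\partial x_j}$ with respect to $u_k$ to obtain $\frac{\partial f_i}{\partial x_k}$, which is exactly the $(i,k)$ entry of $J(\textbf{x})$.

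For part (3), the first equality is immediate from Lemma \ref{lem-commuta}, since $\nabla_{\textbf{v}}\nabla_{\textbf{u}} F = \nabla_{\textbf{u}}\nabla_{\textbf{v}} F$. For the second equality I would expand
\begin{equation*}
\nabla_{\textbf{u}}\nabla_{\textbf{v}} f_i = \sum_{j,k} u_j v_k \frac{\partial^2 f_i}{\partial x_j\,\partial x_k},
\end{equation*}
differentiate with respect to $u_\ell$ to get $\sum_k v_k \frac{\partial^2 f_i}{\partial x_\ell\,\partial x_k} = \frac{\partial}{\partial x_\ell}(\nabla_{\textbf{v}} f_i)$, and observe that this is the $(i,\ell)$ entry of $J(\nabla_{\textbf{v}} F,\textbf{x})$. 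Part (4) is handled the same way using the triple sum $\nabla_{\textbf{u}}\nabla_{\textbf{v}}\nabla_{\textbf{w}} f_i = \sum_{j,k,\ell} u_j v_k w_\ell \frac{\partial^3 f_i}{\partial x_j\,\partial x_k\,\partial x_\ell}$; differentiating once with respect to a component of $\textbf{u}$ recovers $\frac{\partial}{\partial x}(\nabla_{\textbf{v}}\nabla_{\textbf{w}} f_i)$, and the interchangeability of the roles of $\textbf{u}$, $\textbf{v}$, $\textbf{w}$ on the left of each equality follows from Lemma \ref{lem-commuta} applied twice.

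There is no real obstacle here: the lemma is essentially a restatement of Schwarz's theorem on equality of mixed partials combined with the fact that differentiating a linear form $\sum u_j a_j$ in $\textbf{u}$ with respect to $u_k$ extracts the coefficient $a_k$. The only care needed is in keeping the notation for partial Jacobian straight, in particular that $J(\,\cdot\,,\textbf{x}_i)$ differentiates the rows of the vector-valued argument with respect to $\textbf{x}_i$ while holding all other auxiliary vector variables fixed.
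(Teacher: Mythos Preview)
Your proposal is correct and follows the same approach as the paper, which simply states that the proof is ``by direct verification and Lemma \ref{lem-comnab}.'' You have merely spelled out that direct verification in full, using Lemma \ref{lem-commuta} where the paper invokes its corollary Lemma \ref{lem-comnab}; the content is identical.
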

\begin{proof}
The proof is followed by direct verification and Lemma \ref{lem-comnab}.
\end{proof}

With the preparation above, in the following we describe the first 3 steps of the deflation process by an algorithm. From this, we can obtain the first 4 numbers ($n_0,n_1,n_2,n_3$) of the deflation sequence.

\begin{algorithm}
\caption{Generate the first 4 numbers of the deflation sequence}
\begin{algorithmic}[1]
\REQUIRE A polynomial system $F(\textbf{x})=[f_1(\textbf{x}),f_2(\textbf{x}),\dots,
f_m(\textbf{x})]^\textbf{T}$ and a solution $\hat{\textbf{x}}\in \textbf{V}(F)\subseteq \mathbb{C}^n$
\ENSURE The first 4 numbers of the deflation sequence: ($n_0$, $n_1$, $n_2$, $n_3$)
\STATE Set $J_0(\textbf{x}_1)=J(\textbf{x})$, $F_0(\textbf{x}_1)=F(\textbf{x})$ and $\hat{\textbf{x}}_1=\hat{\textbf{x}}$

\STATE Set $n_0=n-\operatorname{rank}(J_0(\hat{\textbf{x}}_1))$

\FOR{$i$=1, 2, 3}

\STATE Choose $R_{i}\in \mathbb{C}^{n_{i-1}\times 2^{i-1}n}$ and
$d_{i}\in\mathbb{C}^{n_{i-1}}$  randomly

\STATE Expand the system $F_{i-1} (\textbf{x}_1,\dots,\textbf{x}_{2^{i-1}})$ to $F_{i}(\textbf{x}_1,\dots,\textbf{x}_{2^{i}})$ by equation (\ref{eq-def-i})

\STATE Calculate the Jacobian matrix of $F_{i}(\textbf{x}_1,\dots,\textbf{x}_{2^{i}})$ by Lemma \ref{lem-comnab} and \ref{lem-Jacom}; denote it by $J_{i}(\textbf{x}_1,\dots,\textbf{x}_{2^{i}})$

\STATE Fixing $\hat{\textbf{x}}_1,\dots,\hat{\textbf{x}}_{2^{i-1}}$,
solve the  (linear) system
$F_{i}(\hat{\textbf{x}}_1,\dots, \hat{\textbf{x}}_{2^{i-1}},
\textbf{x}_{2^{i-1}+1},\dots,\textbf{x}_{2^{i}})=0$;
let $(\hat{\textbf{x}}_{2^{i-1}+1},\dots,\hat{\textbf{x}}_{2^{i}})$ be the (unique) solution

\STATE Calculate $\operatorname{rank}(J_i(\hat{\textbf{x}}_1,\hat{\textbf{x}}_2,
\dots,\hat{\textbf{x}}_{2^{i}}))$

\STATE Set $n_i=2^in-\operatorname{rank}(J_i(\hat{\textbf{x}}_1,\hat{\textbf{x}}_2,
\dots,\hat{\textbf{x}}_{2^{i}}))$

\ENDFOR
\end{algorithmic}
\end{algorithm}

For the polynomial system  $F=[f_1, f_2,\dots,f_m]^{\textsc{T}}$, the degree of $F$ is defined by $$\textrm{deg}~F=\max\{\textrm{deg}~f_i|i=1,2,\dots,m\},$$
where $\textrm{deg}~f_i$ are the degrees of the polynomials $f_i$.
So from above analysis, we have the following proposition.
\begin{proposition}
If $\textrm{deg}~F=3$, in the deflation process (\ref{eq-def-i}), the Jacobians $J_{i}(\textbf{x}_1,\dots,\textbf{x}_{2^{i}})$ are builded up by $R_i$ and the following partial Jacobians
$$J(\textbf{x}),~J(\nabla_{\textbf{u}} F(\textbf{x}),\textbf{x}),~J(\nabla_{\textbf{u}} \nabla_{\textbf{v}} F(\textbf{x}),\textbf{x}),$$
where $\textbf{u},~\textbf{v}$ are (partial) solutions of the extended system.
\end{proposition}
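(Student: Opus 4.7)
The plan is to proceed by induction on the deflation step $i$, tracking the ``shape'' of each row of $F_i$ and then computing its partial Jacobians. First, using the recursion (\ref{eq-def-i}) together with Lemma \ref{lem-comnab}, one shows by induction that every row of $F_i$ is a linear combination of nested directional derivatives
\begin{equation*}
\nabla_{\textbf{x}_{j_1}} \nabla_{\textbf{x}_{j_2}} \cdots \nabla_{\textbf{x}_{j_s}} F(\textbf{x}_1), \qquad j_1, \ldots, j_s \in \{2, 3, \ldots, 2^i\},
\end{equation*}
together with affine terms built from the random data $R_\ell, d_\ell$. The base case $i=0$ is $F_0 = F$; the inductive step uses the fact that the new rows of $F_i$ are obtained by contracting $J_{i-1}$ against a fresh variable block, and each entry of $J_{i-1}$ (computed in the previous step of the induction) is of the desired form, with this contraction appending one more $\nabla_{\textbf{x}_j}$ in front.

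Second, I would compute the partial Jacobians of such a typical term $\nabla_{\textbf{x}_{j_1}} \cdots \nabla_{\textbf{x}_{j_s}} F$ with respect to each variable block. By Lemma \ref{lem-Jacom} combined with the commutativity in Lemma \ref{lem-commuta}: the $\textbf{x}_1$-partial Jacobian equals $J(\nabla_{\textbf{x}_{j_1}} \cdots \nabla_{\textbf{x}_{j_s}} F, \textbf{x})$; the partial Jacobian in a block $\textbf{x}_{j_t}$ that appears in the nesting equals $J(\nabla_{\textbf{x}_{j_1}} \cdots \widehat{\nabla_{\textbf{x}_{j_t}}} \cdots \nabla_{\textbf{x}_{j_s}} F, \textbf{x})$, i.e.\ the nesting depth drops by one; partial Jacobians in blocks that do not appear vanish. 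The affine rows contribute only blocks equal to $R_\ell$ or $0$.

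The decisive observation, which uses $\deg F = 3$, is that $\nabla_{\textbf{u}_1}\nabla_{\textbf{u}_2}\nabla_{\textbf{u}_3} F$ has degree $3-3 = 0$ in $\textbf{x}_1$, so taking one more partial Jacobian in $\textbf{x}_1$ produces zero. Hence only terms of nesting depth $s \in \{0, 1, 2\}$ contribute nonzero entries to the $\textbf{x}_1$-columns of $J_i$, yielding exactly $J(\textbf{x})$, $J(\nabla_{\textbf{u}} F, \textbf{x})$, and $J(\nabla_{\textbf{u}}\nabla_{\textbf{v}} F, \textbf{x})$ (evaluated on the intermediate solutions $\hat{\textbf{x}}_1, \ldots, \hat{\textbf{x}}_{2^i}$), together with random blocks $R_i$. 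The main obstacle is purely bookkeeping: one must verify that every block of $J_i$ in the partition induced by $(\textbf{x}_1, \ldots, \textbf{x}_{2^i})$ can be identified with one of these three types or with $R_i$ or $0$, which requires careful tracking of how the $\nabla_{\textbf{x}_j}$ factors recombine under each differentiation. The degree-$3$ hypothesis is precisely what bounds the nesting depth and closes the induction, so no higher-order tensors of $F$ ever appear.
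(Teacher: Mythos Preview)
Your proposal is correct and rests on the same key observation as the paper: since $\deg F=3$, the triple directional derivative $\nabla_{\textbf{u}}\nabla_{\textbf{v}}\nabla_{\textbf{w}}F(\textbf{x})$ is constant in $\textbf{x}_1$, so $J(\nabla_{\textbf{u}}\nabla_{\textbf{v}}\nabla_{\textbf{w}}F(\textbf{x}),\textbf{x})=\textbf{0}$. The paper's proof is a single sentence stating exactly this fact; your inductive bookkeeping on the shape of the rows of $F_i$ and the block structure of $J_i$ simply makes explicit what the paper leaves to the reader, so the approaches are essentially identical.
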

\begin{proof}
Since $\textrm{deg}~F=3$, we can see that $J(\nabla_{\textbf{u}} \nabla_{\textbf{v}}\nabla_{\textbf{w}} F(\textbf{x}),\textbf{x})=\textbf{0}$, for any solutions $\textbf{u}$, $\textbf{v}$,  $\textbf{w}$ of the extended system.
\end{proof}

The degree of the polynomial systems obtained by tensor decompositions of tensors of order 3 are all of degree 3, such as the Brent equations in next section.

\section{Numerical experiments on Brent equations}\label{se-numerexp}

Firstly, we recall the definition of Brent equations (see also \cite{LBZ-23}).
Let $V_1=\mathbb{C}^{m\times n}$, $V_2=\mathbb{C}^{n\times p}$ and $V_3=\mathbb{C}^{p\times m}$ denote the set of complex $m\times n$, $n\times p$  and $p\times m$ matrices, respectively.
The matrix multiplication tensor is defined by (see e.g. \cite{Bur15,Lands-17})
\begin{equation}\label{eq-mmt}
\langle m,n,p \rangle=\sum_{i=1}^m\sum_{j=1}^n\sum_{k=1}^p e_{ij}\otimes e_{jk}\otimes e_{ki}\in \mathbb{C}^{m\times n}\otimes \mathbb{C}^{n\times p}\otimes \mathbb{C}^{p\times m},
\end{equation}
where $e_{ij}$ denotes the matrix with a 1 in the $i$th row and $j$th column, and other entries are 0. Finding a tensor decomposition of $\langle m,n,p \rangle$ with rank $r$ is equivalent to find solutions of a system of polynomial equations called the \emph{Brent equations}, which is given as follows:
\begin{equation}\label{eq-brent}
\sum_{i=1}^{r} \alpha_{i_1,i_2}^{(i)}\beta_{j_1,j_2}^{(i)}\gamma_{k_1,k_2}^{(i)}=
\delta_{i_2,j_1}\delta_{j_2,k_1}\delta_{k_2,i_1},
\end{equation}
where $k_2,i_1\in \{1,2,\dots,m\}$, $i_2,j_1\in\{1,2,\dots,n\}$,
$j_2,k_1\in\{1,2,\dots,p\}$, $\delta_{ij}$ is the Kronecker symbol, $\alpha_{i_1,i_2}^{(i)}$, $\beta_{j_1,j_2}^{(i)}$ and $\gamma_{k_1,k_2}^{(i)}$ are unknown variables.

In the following, the polynomial system in (\ref{eq-brent}) is called \emph{the Brent equations of type $[m,n,p|r]$}, which is denoted by  $B(m,n,p|r)$. We can see that the polynomial system $B(m,n,p|r)$ has $(mn+np+pm)r$ variables and $(mnp)^2$ equations.
So $B(m,n,p|r)$ can be considered as a
polynomial mapping:
$$B(m,n,p|r)\colon \mathbb{C}^{(mn+np+pm)r}\to \mathbb{C}^{(mnp)^2}.$$
Usually, $B(m,n,p|r)$ is an overdetermined polynomial system. The solution set of $B(m,n,p|r)$ over $\mathbb{C}$ is denoted by $\textbf{V}(m,n,p|r)$.


In the following of this section, we report our experimental results.  By Theorem \ref{thm-nulldi} and Remark \ref{rem-sinlowb}, the deflation sequence tells us the bound of the local dimension. We can see that the as $r$ in (\ref{eq-brent}) decreases, singular solutions become more and more.

\subsection{Deflation test on $\textbf{V}(2,2,2|7)$: Strassen's classical solution}\label{secs-strass}
\


For $B(2,2,2|7)$, it is an underdetermined system which has $m=64$ equations and $n=84$ variables.
So from equation (3) of \cite{WHS-11}, we know that for any
$p\in\textbf{V}(2,2,2|7)$, its local dimension has a lower bound
$$20=84-64=n-m\leq\dim_p\textbf{V}(2,2,2|7).$$
In particular, for the classical solution of $B(2,2,2|7)$ given by Strassen \cite{Stra-69} (see also \cite{Bur15,Lands-17}), after doing deflation 3 times, the first 4 numbers of  deflation sequence are
\begin{equation}\label{eq-ds222}
(23, 23, 23, 23).
\end{equation}
In fact, its local dimension is 23. In the following, we will give a detailed explanation.

\begin{remark}\label{rem-2227}\footnote{We are grateful to the anonymous referee for improving this remark.}
Suppose that
\begin{equation}\label{eq-mnp}
\langle m,n,p \rangle=u_1\otimes v_1\otimes w_1+u_2\otimes v_2\otimes w_2+\cdots+u_r\otimes v_r\otimes w_r,
\end{equation}
where $u_i\in\mathbb{C}^{m\times n}$, $v_i\in\mathbb{C}^{n\times p}$, $w_i\in\mathbb{C}^{p\times m}$. We have the following group actions on   $\textbf{V}(m, n, p |r)$.
\begin{enumerate}
  \item
Let $a \in GL_m(\mathbb{C}), b \in GL_n(\mathbb{C})$, and $c\in
GL_p(\mathbb{C})$. It is well-known that the transformation $T(a, b, c)$ of $\mathbb{C}^{m\times n} \otimes \mathbb{C}^{n\times p} \otimes \mathbb{C}^{p\times n}$, defined by
$$x \otimes y \otimes z \mapsto a x b^{-1} \otimes b y c^{-1} \otimes c z a^{-1}$$
preserves the tensor $\langle m, n, p\rangle$ (see e. g. \cite{Bur15,deGr78-1,deGr78-2}). So the transformation
$$x \mapsto\left(\left(a u_1 b^{-1}, b v_1 c^{-1}, c w_1 a^{-1}\right), \ldots,\left(a u_r b^{-1}, b v_r c^{-1}, c w_r a^{-1}\right)\right)$$
preserves $\textbf{V}(m, n, p |r)$. Therefore the group $GL_m(\mathbb{C}) \times GL_n(\mathbb{C}) \times GL_p(\mathbb{C})$ acts on $\textbf{V}(m, n, p |r)$. The kernel (stabilizer) of this action consists of all triples $\left(\lambda I_m, \lambda I_n, \lambda I_p\right)$, where $\lambda \in \mathbb{C}^*=\mathbb{C}\setminus \{0\}$. Denote this group of transformations by $G_1$. Clearly,
\begin{align*}
 \dim G_1=&\dim GL_m(\mathbb{C})+\dim GL_n(\mathbb{C})+\dim GL_p (\mathbb{C})-1 \\
 =&m^2+n^2+p^2-1.
 \end{align*}
From Theorem 4.12 of \cite{Bur15}, besides $G_1$, there is another discrete group  $Q(m,n,p)$ that acts on $\textbf{V}(m, n, p |r)$. $Q(m,n,p)$ is a subgroup of $S_3$ depending on $m$, $n$ and $p$. It is well-known that the isotropy group of $\langle m,n,p \rangle$ consists of
$G_1$ and $Q(m,n,p)$.

\item
Let
     \begin{equation}\label{eq-lmc}
     \lambda=(\lambda_1,\lambda_2,\ldots,\lambda_r),~\mu=
     (\mu_1,\mu_2,\ldots,\mu_r)
     \in(\mathbb{C}^*)^r.
     \end{equation}
 Then we have
\begin{align}
\langle m,n,p \rangle = & (\lambda_1u_1)\otimes (\mu_1v_1)\otimes\left((\lambda_1\mu_1)^{-1}w_1\right)
  +(\lambda_2u_2)\otimes (\mu_2v_2)\otimes \left((\lambda_2\mu_2)^{-1}w_2\right)\notag\\
   &+\cdots+(\lambda_ru_r)\otimes(\mu_rv_r)\otimes \left((\lambda_r\mu_r)^{-1}w_r\right)\label{eq-mnplm}.
\end{align}
It is not hard to see that $(\mathbb{C}^*)^r$ can be considered as the set of nonsingular diagonal matrices of size $r\times r$ which is a linear algebraic group of dimension $r$. Let $G_2=(\mathbb{C}^*)^{2r}=(\mathbb{C}^*)^{r}\times (\mathbb{C}^*)^{r}$. Then
by (\ref{eq-lmc}) and (\ref{eq-mnplm}), we can see that there is an action of the linear algebraic group $G_2$ on $\textbf{V}(m,n,p;r)$. The dimension of $G_2$ is
           $$\dim G_2=2r.$$

\item From (\ref{eq-mnp}) we have
\begin{equation}\label{eq-sr}
  \langle m,n,p \rangle=u_{i_1}\otimes v_{i_1}\otimes w_{i_1}+u_{i_2}\otimes v_{i_2}\otimes w_{i_2}+\cdots+u_{i_r}\otimes v_{i_r}\otimes w_{i_r},
\end{equation}
where ($i_1$, $i_2$,...,$i_r$) is a permutation of (1,2,...,$r$). Let $S_r$ be the symmetric group of order $r$ and denote  $G_3=S_r$. So from (\ref{eq-sr}), there is an action of the linear algebraic group $G_3$ on $\textbf{V}(m,n,p|r)$. The dimension of $G_3$ is $$\dim G_3=0.$$
\end{enumerate}

We can see that for all tensors $t$ of order 3, the actions of $G_2$ and $G_3$ always exist. Let $G$ be the group of transformations of $\textbf{V}(m,n,p|r)$ generated by $G_1$, $Q(m,n,p)$, $G_2$ and $G_3$. It is easy to see that $G_1$ and $G_2$ commute elementwise, and their intersection is isomorphic to $\left(\mathbb{C}^*\right)^2$.
And all  elements of $Q(m,n,p)$ and $G_3$ normalize both $G_1$ and $G_2$. So the group $G^0=G_1 G_2$ is connected, normal in $G$, and of dimension
\begin{equation*}
\dim G^0=\dim G_1+\dim G_2-2=m^2+n^2+p^2+2r-3.
\end{equation*}
Moreover, we have
\begin{equation*}
\dim G=\dim G^0.
\end{equation*}

For $x\in \textbf{V}(m,n,p|r)$, let $G_x=\{g\in G|g\cdot x=x\}$ be the stabilizer of $x$ in $G$. By Proposition 2.6 of \cite{LBZ-23}, we have the lower bound
\begin{align*}
\dim_x \textbf{V}(m,n,p|r)\geq & \dim G\cdot x\\
                  =  & \dim G-\dim G_x \\
                  =  & \dim G^0-\dim G_x \\
                  = & m^2+n^2+p^2+2r-3-\dim G_x.
\end{align*}
In particular, if the stabilizer $G_x$ is finite, then $\dim G_x=0$ and the orbit $G\cdot x$ has dimension $\dim G^0$. So we have
\begin{equation}\label{eq-lbound}
\dim_x \textbf{V}(m,n,p|r) \geq \dim G^0 =m^2+n^2+p^2+2r-3.
\end{equation}
If $G_x$ is finite, when $(m, n, p, r)=(2,2,2,7)$, by (\ref{eq-lbound}) we have
$$\dim_x \textbf{V}(2,2,2|7)\geq 4+4+4+14-3=23.$$
Thus, there is no gap between upper and lower bounds
for $\dim_x \textbf{V}(2,2,2|7)$. Hence, here for Strassen's algorithm, there is no appearing ``contradiction" with de Groote's theorem \cite{deGr78-2} where the variety of algorithms is of projective dimension nine.  In the sense of de Groote, ``the variety of algorithms" is the set of equivalent classes of $\textbf{V}(m,n,p|r)$ under the actions of $G_2$ and $G_3$ (or $\textbf{V}(m,n,p|r)/G_2G_3$), rather than $\textbf{V}(m,n,p|r)$ itself.

For $x\in\textbf{V}(3,3,3|23)$, when
$G_x$ is finite, by (\ref{eq-lbound}) we have
\begin{align}\label{eq-lb333}
\dim_x \textbf{V}(3,3,3|23) \geq & \dim G\cdot x=\dim G-\dim G_x \notag\\
            =&\dim G^0-0\\
            =& 70 \notag.
\end{align}
Similarly, for $x\in\textbf{V}(4,4,4|49)$, when
$G_x$ is finite we have
\begin{align}\label{eq-lb444}
\dim G\cdot x=\dim G^0-0=143.
\end{align}

\end{remark}

\subsection{Deflation test on $\textbf{V}(3,3,3|23)$}\label{subsec-333}
\

In the website \cite{Heule19}, using SAT solvers Heule et al. provided 17376 solutions in $\textbf{V}(3,3,3|23)$ which are inequivalent under the de Groote group action  \cite{Bur15,deGr78-1}.
We apply the deflation test on these solutions. We found that most of these solutions are ultrasingular. Let $(n_0,n_1,n_2,n_3)$ be the first 4 numbers of the deflation sequence. Overall, from the experimental result, we can see that $76\leq n_0\leq 95$ (see also \cite[Sect. 3]{LBZ-23}). After doing deflation 3 times, we can see that $70\leq n_3\leq 86$. Thus, comparing with (\ref{eq-lb333}),
after doing deflation the gaps between the upper and lower bounds of
  $\dim_x \textbf{V}(3,3,3|23)$ are not large.
Interestingly, among the 17376 solutions there exists a unique solution whose first 4 numbers of the deflation sequence are
\begin{equation}\label{eq-ds333}
(n_0,n_1,n_2,n_3)=(91,73,70,70).
\end{equation}
So the gap between the upper and lower bound is zero for this solution.
The data can be found on \cite{L-23}. The results are summarized below.

\subsubsection{The first step of deflation}
Among the 17376 solutions, after doing the first step of deflation, there are 16511 solutions whose nullities of Jacobians are strictly decreased, that is, $n_0>n_1$. Let $d_1=n_0-n_1$. Then the numerical experiment tells us that $0\leq d_1\leq 23$.
The distribution is as follows.
\begin{table}[h]
\begin{tabular}{|l|l|l|l|l|l|}
\hline
$d_1=n_0-n_1$ & $d_1=0$ &  $1\leq d_1\leq 3$ & $4\leq d_1\leq 10$ & $11\leq d_1\leq 23$ & Total number\\ \hline
Number & 865 & 5495 & 9375  &  1641 &17376\\ \hline
\end{tabular}
\end{table}

\subsubsection{The second step of deflation}

After doing the second step of deflation, there are 16206 solutions whose nullities of Jacobians are strictly decreased, that is, $n_1>n_2$. Let $d_2=n_1-n_2$. Then the numerical experiment tells us that $0\leq d_2\leq 10$.
The distribution is as follows.
\begin{table}[h]
\begin{tabular}{|l|l|l|l|l|}
\hline
$d_2=n_1-n_2$ & $d_2=0$ &  $1\leq d_2\leq 3$ & $4\leq d_2\leq 10$ & Total number\\ \hline
Number & 1168 & 14732 & 1476   &17376\\ \hline
\end{tabular}
\end{table}

\subsubsection{The third step of deflation}

In this step, there are only 978 solutions whose nullities of Jacobians are strictly decreased, that is, $n_2>n_3$. Let $d_3=n_2-n_3$. Then the numerical experiment tells us that $0\leq d_3\leq 5$. By Theorem \ref{thm-nulldi}, we know that $n_2\geq n_3$. However, due to the testing being numerical, some errors appear for 41 solutions, that is, $n_2-n_3<0$. On the other hand, if we restart the deflation process separately for these solutions, we can find that $n_2\geq n_3$. The distribution can be seen
as follows.
\begin{table}[h]
\begin{tabular}{|l|l|l|l|l|l|}
\hline
$d_3=n_2-n_3$ & $d_3=0$ &  $ d_3=1$ & $2\leq d_3\leq 5$ & Error ($d_3<0$) & Total number\\ \hline
Number & 16357 & 795 & 183  &  41 &17376\\ \hline
\end{tabular}
\end{table}

The first 4 numbers of deflation sequence of Laderman's solution \cite{Lad-76} are (76,76,76,76).  The first 4 numbers of deflation sequence of Smirnov's solution \cite{Smir-13} in $\textbf{V}(3,3,3|23)$ are (87,82,79,79).

\subsection{Deflation test on $\textbf{V}(4,4,4|49)$}
\

In \cite{Faw22,Faw22+},  Fawzi et al. provided 14236 solutions in $\textbf{V}(4,4,4|49)$, which are inequivalent under the isotropy group action. We take deflation on these points and find that the ultrasingular solutions are relatively small among all the 14236 solutions.

Note that $B(4,4,4|49)$ has 2352 variables and 4096 equations. For each solution, it takes about 10 minutes for the computation of $(n_0,n_1,n_2,n_3)$. It will spend a lot of time if we take the first 3 steps of deflation for all 14236 solutions. On the other hand, the time for the computation of $(n_0,n_1)$ is relatively acceptable. So we firstly take the first step of  deflation on Fawzi et al.'s solutions and obtain $(n_0,n_1)$ of the 14236 solutions, the data can be found on \cite{L-23}.   The distribution is summarized as follows.
\begin{table}[h]
\begin{tabular}{|l|l|l|l|}
\hline
 & $n_1=n_0=197$ &  $ n_0>n_1$ &  Total number\\ \hline
Number &  13530 & 706  &   14236\\ \hline
\end{tabular}
\end{table}

Secondly, from above table, for the 13530 solutions whose $n_0=n_1=197$, we randomly choose 100 solutions and take the second and third deflation steps to obtain $(n_2,n_3)$.  For all the 100 solutions, we find that $n_2=n_3=197$, so the first 4 numbers of their deflation sequences are all equal to: (197,197,197,197). So this implies that for all 13530 solutions we should have $n_0=n_1=n_2=n_3=197$.

For the remaining 706 solutions, their first 4 numbers of deflation sequences ($n_0,~n_1,~n_2,~n_3$) are computed.  We found that:
\begin{enumerate}
  \item $151\leq n_0\leq 208$;
  \item $ 145\leq n_3\leq 182$;
  \item for most solutions we have $n_2=n_3$.
\end{enumerate}

\subsection{Deflation test on other well-known solutions}
\

Firstly, the first 4 numbers of deflation sequence Smirnov's solution \cite{Smir-13} that lies in $\textbf{V}(3,3,6|40)$ are: (131, 131, 131, 131). So it seems that this solution is semiregular.
Recently, Kauers and Moosbauer found many new solutions of $B(m,n,p|r)$ and put them on Github \cite{KM-23-1,KM-23-2}. We take deflation process for some solutions that are suitable to the complex field.
The first 4 numbers of deflation sequence are summarized in Table \ref{tab-KM}.
\begin{table}[h]
\begin{tabular}{|l|l|}
\hline
$(m,n,p|r) $   & $(n_0,n_1,n_2,n_3)$     \\ \hline
$(3,4,5|47)$   & (152,149,148,148 )      \\ \hline
$(4,4,5|62)$   & (201,194,193,193 )      \\ \hline
$(4,5,5|76)$   & (251,251,251,251 )      \\ \hline
$(5,5,5|97)$   & (395,344,339,339 )      \\ \hline
\end{tabular}
\caption{Deflation sequences for some solutions in \cite{KM-23-1,KM-23-2}}\label{tab-KM}
\vspace{-1.5em}
\end{table}

In Table \ref{tab-KM}, $(m,n,p|r)$ means the solution lie in the set $\textbf{V}(m,n,p|r)$, which can be found in the Github database of the article \cite{KM-23-1,KM-23-2}. In particular, the computation of $(395,344,339,339)$ for $(5,5,5|97)$ spends the time about 4.8 hours on our server.

\subsection{Deflation test on natural algorithms}\label{se-natne}
\

When $r=mnp$, the definition of $\langle m,n,p\rangle$ in (\ref{eq-mmt}) provides a solution of $B(m,n,p|mnp)$ which is denoted by $N(m,n,p)$. Before Strassen's algorithm \cite{Stra-69},
we usually use the algorithm induced in (\ref{eq-mmt}) for the computation of matrix multiplication. So we call $N(m,n,p)$ the \emph{natural algorithm}.

Intuitively, $N(m,n,p)$ should be semiregular. So we should have
$n_0=n_1=n_2=n_3$ for their deflation sequences. When $2\leq m,n,p\leq 4$,  our experimental results are summarized in Table \ref{tab-Nat}. In Table \ref{tab-Nat}, it is interesting to see that there is a big drop between $n_1$ and $n_2$. It tells us that the natural algorithms are geometric singular.

\begin{table}[h]
\begin{tabular}{|l|l|}
\hline
             & $(n_0,n_1,n_2,n_3)$     \\ \hline
$N(2,2,2)$   & (40,40,32,22 )      \\ \hline
$N(2,2,3)$   & (72,72,48,34)      \\ \hline
$N(2,3,3)$   & (126,126,54,50)      \\ \hline
$N(3,3,3)$   & (216,216,72,72)    \\ \hline
$N(3,3,4)$   & (324,324,96,96)     \\ \hline
$N(3,4,4)$   & (480,480,126,126)      \\ \hline
$N(4,4,4)$   & (704,704,164,164)      \\ \hline
\end{tabular}
\caption{Deflation sequences for natural algorithms}\label{tab-Nat}
\vspace{-1.5em}
\end{table}

\section{final remarks and open problems}\label{se-ropen}

\subsection{The local dimension and the inequivalent scheme}
Given a solution $x\in\textbf{V}(m,n,p|r)$ with deflation sequence ($n_0$, $n_1,\ldots,n_{i_0}$), then $n_k$ ($1\leq k\leq i_0$) can be considered as the improved upper bound of the local dimension of $x$.
Suppose that $n_{i_0}=\dim_x \textbf{V}(m,n,p|r)$ and it is strictly greater than the lower bound provided in (\ref{eq-lbound}):
$$
n_{i_0}>m^2+n^2+p^2+2r-3.
$$
That is, the local dimension of $x$ is strictly larger than the dimension of the group orbit $G\cdot x$, where $G$ is the group studied in Remark \ref{rem-2227}. A lower dimensional algebraic set is of measure zero, when we put it in a higher dimensional algebraic set. So in the neighbourhood of $x$, there exist many (in fact, infinite many) other different $G$-orbits.
Since the isotropy group orbit is contained in the $G$-orbit, there exist many other different isotropy group orbits in the neighbourhood of $x$. That is, in the neighbourhood of $x$, there are many other $y\in \textbf{V}(m,n,p|r)$ such that $x$ and $y$ are not lie in the same isotropy group orbit. In other words, $x$ and $y$ are inequivalent matrix multiplication schemes \cite{Heule21}.



\subsection{The local dimension and the deflation sequence}

Let $\hat{\textbf{x}}$ be a solution of a polynomial system $F$. Suppose that $\hat{\textbf{x}}$ lies on a a $k$-dimensional ultrasingular irreducible component. Let
$$(n_0,n_1,\ldots,n_{i_0})$$
be  deflation sequence of $\hat{\textbf{x}}$  defined in (\ref{eq-defsequence}). It is an open problem to give an estimation of the number $i_0$. In particular, for Brent equations, can we give some estimations?

From Section \ref{subsec-333}, we know that for the 17376 solutions in $\textbf{V}(3,3,3|23)$, most of their deflation sequences satisfy
$$n_0>n_1>n_2=n_3.$$
For other solutions studied in Section \ref{se-numerexp}, we can see that most of their deflation sequences satisfy
$$n_2=n_3,$$
that is, they begin to stabilize after doing deflation 2 times.
In the deflation sequences (\ref{eq-ds222}) and (\ref{eq-ds333}), we know that $n_2=n_3$ and they are the local dimensions of the corresponding solutions. So combining the numerical experimental results in Section \ref{se-numerexp}, we have the following question.
\begin{question}
Suppose that $x\in\textbf{V}(m,n,p|r)$ with deflation sequence $(n_0,n_1,\ldots,n_{i_0})$ defined in Definition \ref{def-defseq}.
Give an upper bound of $i_0$. Moreover, we wonder if $i_0\leq3$?
\end{question}


%



\subsection{The deflation sequence as an invariant}
In \cite[Quest. 3.15]{LBZ-23}, we observed that the rank of Jacobian of $B(m,n,p|r)$ may be served as an invariant to decide the equivalent classes under the de Groote group action  \cite{Bur15,deGr78-1}. Further discussions of de Groote group action can be seen from \cite{Heule21,Lands-17,Tichav-21}, etc. In this paper, from the numerical experiment, we also find that the deflation sequences are unchanged when the solutions are
transformed by the de Groote group action. So we have the following question.
\begin{question}
For two points in $\textbf{V}(m,n,p|r)$, if their deflation sequences are different, do they also lie in different group orbits under the de Groote group action?
\end{question}

For the deflation sequence $(n_0,n_1,\dots)$, $n_0$ is the nullity of Jacobian. So the deflation sequence contains more information and is more useful to distinguish the equivalent classes.  For the solutions in \cite{Heule19}, in our numerical experiment, it is easy to find two deflation sequences $(n_0,n_1,\dots)$ and $(n_0',n_1',\dots)$ such that $n_0=n_0'$ but $n_1\neq n_1'$.

However, for the solutions provided in \cite{Faw22+}, almost all solutions have the deflation sequence:(197,197,197,197).  So in this case, distinguishing equivalent classes by deflation sequences is not so effective. In \cite[Sect. 3.3]{LBZ-23}, we discuss the gap between the upper and lower bound, and ask when $r$ is the tensor rank if the gap is zero (or very small). From (\ref{eq-lb444}), the gap between the upper and lower bound is $197-143=54$, which is relatively large. Moreover, for $\langle4,4,4\rangle$ the tensor decomposition of length 47 over $\mathbb{Z}_2$ has been found in \cite{Faw22}.
So this implies that the tensor rank of $\langle4,4,4\rangle$ may be less than 49.

\subsection{Deciding the singularity of a solution}\label{subs-desin}
Given a polynomial system and a solution, which type of the solution belongs to: numerical smooth/singular or geometric smooth/singular? The deflation sequence only tells us partial information about the type.

\subsection{The real deflation process}
By now, the deflation processes in \cite{DZeng-05,DLZeng-11,HW-13, LVZ-06} are given for the complex algebraic set. In practice, the polynomial system we are interested is often of real coefficients and has real solutions. For example, the Brent equations are of this type.
Are the results in \cite{DZeng-05,DLZeng-11,HW-13, LVZ-06} also suitable to the real algebraic set? For example, it is not hard to see that Conjecture \ref{conj-dc} also has a real counterpart.

\subsection{On local real dimension and local complex dimension}
From the relation between fast matrix algorithms and solutions of the Brent equations \cite{Bur15,Heule21}, we can see that integer, rational and real solutions are more useful than complex solutions.
For real solutions of the Brent equation, we want to know the difference between the local real dimension and local complex dimension. Related discussions are given in the paper \cite{HHS-23}.


\subsection{On the numerical determination of local dimension}
On how to determine the local dimension, Kuo and Li provided a numerical method in \cite{KL-08}. In addition, in the neighbourhood of the solution, the convergence behavior under the rank-$r$ projection iteration in \cite{Zeng-23} can also help us identify the local dimension of the solution.


\section*{Acknowledgments}
We are very grateful to the editors and referees for their valuable comments and suggestions. In particular, we are grateful to the anonymous referee who kindly reminds us the linear group actions
we have overlooked, which improves the Remark \ref{rem-2227}.

Xin Li is supported by National Natural Science Foundation of China (Grant No.11801506).  Liping Zhang is supported by the Zhejiang Provincial Natural Science Foundation of China (Grant No. LY21A010015) and Grant 11601484, the National Natural Science Foundation, People's Republic of China.

The discussion with Professor Zhonggang Zeng of Northeastern Illinois University inspired us a lot. We are grateful to him for so many valuable comments and suggestions. We are grateful to Yuan Feng, Xiaodong Ding, Liaoyuan Zeng, Changfeng Ma and Jinyan Fan for their kind supports and suggestions. Many thanks to Professor
J. Hauenstein, J. Landsberg, Petr Tichavsk\'{y} and Ke Ye for many helpful discussions when we prepare this paper.
Xin Li is grateful to Yaqiong Zhang for her consistent support and understanding.

\bibliographystyle{amsplain}

\begin{thebibliography}{20}

\bibitem{Bates09}
D. J. Bates, Jonathan D. Hauenstein, C. Peterson, Andrew J. Sommese, \textit{A numerical local dimensions test for points on the solution set of a system of polynomial equations}
SIAM J. Numer. Anal. 47(2009), no.5, 3608-3623.

\bibitem{Bates10}
D.J. Bates, J.D. Hauenstein, C. Peterson, A.J. Sommese, \emph{Numerical decomposition of the rank-deficiency set of a matrix of multivariate polynomials,} in: L. Robbiano, J. Abbott (Eds.), Approximate Commutative Algebra, Springer Vienna, Vienna, 2010, pp. 55-77.



\bibitem{BHSW-13}
D. J. Bates, J. D. Hauenstein, A. J. Sommese, and C. W. Wampler, \emph{Numerically solving polynomial systems with Bertini}, Software, Environments, and Tools, vol. 25, Society for Industrial and Applied Mathematics (SIAM), Philadelphia, PA, 2013.

\bibitem{BEHP-21}
D. J. Bates, D. Eklund, J. D. Hauenstein, and C. Peterson. \emph{Excess intersections and numerical irreducible decompositions.} In 2021 23rd International Symposium on Symbolic
and Numeric Algorithms for Scientific Computing (SYNASC), pages 52-60, 2021.


\bibitem{Bur15}
Vladimir P. Burichenko, \textit{Symmetries of matrix multiplication algorithms I},  arXiv:1508.01110, 2015.


\bibitem{cox15}
David A. Cox,  J. Little,  D. O'Shea,
\textit{Ideals, varieties, and algorithms.} Fourth edition. Undergraduate Texts in Mathematics. Springer, 2015.




\bibitem{DZeng-05}
Barry H. Dayton, Zhonggang Zeng,  \emph{Computing the multiplicity structure in solving polynomial systems.} ISSAC'05, 116-123, ACM, New York, 2005.

\bibitem{DLZeng-11}
Barry H. Dayton, Tien-Yien Li, Zhonggang Zeng, \emph{Multiple zeros of nonlinear systems.} Math. Comp. 80(2011), no.276, 2143-2168.


\bibitem{deGr78-1}
H.F. de Groote, \textit{On varieties of optimal algorithms for the computation of bilinear mappings. I. The isotropy group of a bilinear mapping}, Theor. Comput. Sci. 7(1978), 1-24.


\bibitem{deGr78-2}
H.F. de Groote, \textit{On varieties of optimal algorithms for the computation of bilinear mappings. II. Optimal algorithms for $2\times2$ matrix multiplication}, Theor. Comput. Sci. 7(1978), 127-148.


\bibitem{DS07}
W. Decker, F. O. Schreyer, \textit{Varieties, Gr\"{o}bner Bases, and Algebraic Curves}. To appear. 2007.


\bibitem{deLim-08}
V. de Silva, L.-H. Lim,
\emph{Tensor rank and the ill-posedness of the best low-rank approximation problem.} SIAM J. Matrix Anal. Appl. 30(2008), no.3, 1084-1127.

\bibitem{Faw22}
A. Fawzi et al. \textit{Discovering faster matrix multiplication algorithms with reinforcement learning.} Nature, 610:47-53, 2022.

\bibitem{Faw22+}
A. Fawzi et al. \textit{Discovering faster matrix multiplication algorithms with reinforcement learning.} Github: \url{https://github.com/deepmind/alphatensor/}, 2022.


\bibitem{HHS-23}
K. Harris, Jonathan D. Hauenstein, Agnes Szanto,
 \textit{Smooth points on semi-algebraic sets.} J. Symbolic Comput. 116(2023), 183-212.


\bibitem{HW-13}
Jonathan D. Hauenstein, Charles W. Wampler
\emph{Isosingular sets and deflation.}
Found. Comput. Math. 13(2013), no.3, 371-403.


\bibitem{Heule19}
Marijn J. H. Heule, M. Kauers, M. Seidl, Matrix multiplication repository. \url{http://www.algebra.uni-linz.ac.at/research/matrix-multiplication/}. Accessed: 2019-04-26.


\bibitem{Heule21}
Marijn J. H. Heule, M. Kauers, M. Seidl,
\textit{New ways to multiply $3\times3$-matrices},
J. Symbolic Comput. 104(2021), 899-916.


\bibitem{HLim-13}
Christopher J. Hillar, Lek-Heng Lim,
\emph{Most tensor problems are NP-hard.} J. ACM 60(2013), no.6, Art.45, 39pp.

\bibitem{KM-23-1}
M. Kauers, J. Moosbauer,  \emph{Flip Graphs for Matrix Multiplication}, ISSAC'23: 381-388, ACM, Tromso, Norway, 2023.

\bibitem{KM-23-2}
M. Kauers, J. Moosbauer,  \emph{Some New Non-Commutative Matrix Multiplication Algorithms of Size $(n,m,6)$}, arXiv:2306.00882v1, 2023. \url{https://github.com/jakobmoosbauer/flips}


\bibitem{KL-08}
Y. C. Kuo, T. Y. Li,
\emph{Determining dimension of the solution component that contains a computed zero of a polynomial system},  J. Math. Anal. Appl. 338(2008), no.2, 840-851.


\bibitem{Lad-76}
Julian D. Laderman,
\emph{A noncommutative algorithm for multiplying $3\times3$ matrices using 23 multiplications.} Bull. Amer. Math. Soc. 82(1976), no.1, 126-128.


\bibitem{Lands-17}
J. M. Landsberg, \textit{Geometry and complexity theory}.
Cambridge Studies in Advanced Mathematics, 169. Cambridge University Press, Cambridge, 2017.

\bibitem{Lai-21}
P. Lairez,   M. Safey El Din,
\textit{Computing the dimension of real algebraic sets.} ISSAC'21, 257-264, ACM, New York.

\bibitem{LVZ-06}
Anton Leykin,  Jan Verschelde, Ailing Zhao,
\emph{Newton's method with deflation for isolated singularities of polynomial systems.} Theoret. Comput. Sci. 359(2006), no.1-3, 111-122.

\bibitem{Lew-09}
A. D. Lewis,
\emph{Semicontinuity of rank and nullity and some consequences},
\url{https://mast.queensu.ca/~andrew/notes/pdf/2009a.pdf}, preprint 2009.

\bibitem{LBZ-23}
X. Li, Y. Bao, L. Zhang, \emph{On the local dimensions of solutions of Brent equations}, arXiv:2303.09754v2.

\bibitem{L-23}
X. Li, Github: \url{https://github.com/Xinli-zjut}.

\bibitem{OWM-83}
T. Ojika,  S. Watanabe, T. Mitsui,
\emph{Deflation algorithm for the multiple roots of a system of nonlinear equations.} J. Math. Anal. Appl. 96(1983), no.2, 463-479.

\bibitem{O-87}
T. Ojika,  \emph{Modified deflation algorithm for the solution of singular problems. \uppercase\expandafter{\romannumeral1}. A system of nonlinear algebraic equations.}
J. Math. Anal. Appl.  123(1987), no.1, 199-221.




\bibitem{SW-05}
A.J. Sommese, C.W. Wampler, \emph{The Numerical Solution of Systems of Polynomials Arising in Engineering and Science}, World Scientific, Singapore, 2005.

\bibitem{Smir-13}
A. V. Smirnov, \textit{The bilinear complexity and practical algorithms for matrix multiplication.} Comput. Math. Math. Phys. 53(2013), no.12, 1781-1795.

\bibitem{Stra-69}
V. Strassen, \textit{Gaussian elimination is not optimal,} Numer. Math. 13(1969), 354-356.

\bibitem{Tichav-21}
Petr Tichavsk\'{y}, \emph{Characterization of Decomposition of Matrix Multiplication Tensors}, arXiv:2104.05323v1, 2021.


\bibitem{WHS-11}
C. W. Wampler, J. D. Hauenstein,  A. J. Sommese,
\textit{Mechanism mobility and a local dimension test},
Mechanism and Machine Theory, 46(2011) 1193-1206.


\bibitem{Zeng-23}
Zhonggang Zeng, \emph{A Newton's iteration converges quadratically to nonisolated solutions too}, Math. Comp. 92(2023), no.344, 2795-2824.

\end{thebibliography}

\end{document}